\theoremstyle{definition}
\newtheorem{definition}{Definition}[section]
\newtheorem{theorem}[definition]{Theorem}
\newtheorem*{theorem*}{Conjecture}
\newtheorem{proposition}[definition]{Proposition}
\newtheorem{lemma}[definition]{Lemma}
\newtheorem{corollary}[definition]{Corollary}
\theoremstyle{remark}
\newtheorem{remark}[definition]{Remark}
\newtheorem{example}[definition]{Example}
\newcounter{enumctr}
\newcommand{\R}{\mathbb{R}}
\newcommand{\N}{\mathbb{N}}
\def\du{{\ensuremath{\mathrm{d}}}}
\providecommand{\keywords}[1]{\textbf{\textbf{Key words: }} #1}
\begin{document}
	\title{Asymptotic behavior of solutions to some classes of multi-order fractional cooperative systems} 
	\author{La Van Thinh\footnote{\tt lavanthinh@hvtc.edu.vn, \rm Academy of Finance, No. 58, Le Van Hien St., Duc Thang Wrd., Bac Tu Liem Dist., Hanoi, Viet Nam},  Hoang The Tuan\footnote{\tt tuanht@gbu.edu.vn, \rm Department of Mathematics, Great Bay University, Dongguan, Guangdong 523000, China and Institute of Mathematics, Vietnam Academy of Science and Technology, 18 Hoang Quoc Viet, 10307 Ha Noi, Viet Nam}}
	\date{18/11/2023}
	\maketitle
\begin{abstract}
This paper is devoted to the study of the asymptotic behavior of solutions to multi-order fractional cooperative systems. First, we demonstrate the boundedness of solutions to fractional-order systems under certain conditions imposed on the vector field.  We then prove the global attractivity and the convergence rate of solutions to such systems (in the case when the orders of fractional derivatives are equal, the convergence rate of solutions is sharp and optimal). To our knowledge, these kinds of results are new contributions to the qualitative theory of multi-order fractional positive systems and they seem to have been unknown before in the literature. As a consequence of this result, we obtain the convergence of solutions toward a non-trivial equilibrium point in an ecosystem model (a particular class of fractional-order Kolmogorov systems). Finally, some numerical examples are also provided to illustrate the obtained theoretical results.
\end{abstract}
\keywords{Multi-order fractional nonlinear systems, cooperative systems, homogeneous systems, global attractivity, convergence rate of solutions}\\

{\bf AMS subject classifications:}  34A08, 34K37, 45G05, 45M05, 45M20
\section{Introduction}
Positive systems are dynamic systems in which their state variables remain in the first orthant of $\R^d$ when the initial conditions are initiated in this domain. Up to now, an impressive number of theoretical and
applicative contributions to this theory have been published, see, e.g., 
\cite{Nieuwenhuis,Coxson,Carson,Benvenuti,Moreno,Haddad,Vargas,Vecchio,Blanchini}.

A special class of nonlinear positive systems is the cooperative systems which have been discussed extensively, especially in connection with biological applications, see, e.g., {\cite{Hirsch,Smillie,Smith86,H.L.Smith,Shen}} while the cooperative systems with the added homogeneous structure are mentioned in \cite{Aeyels2,FCM}. In particular, consider the system 
\begin{align}
\frac{d}{dt}x(t)&=f(x(t)),\;t>0,\label{e01}\\
x(0)&=x^0\in\R^d_{\geq 0},\label{e02}
\end{align}
here the vector field $f(\cdot)$ is {\em homogeneous of degree $p\geq 1$} and {\em cooperative}. From the perspective of positive system theory, in \cite{Aeyels1}, the authors have proven that the system \eqref{e01}--\eqref{e02} is {\em asymptotically stable} if and only if there exists a vector $v\succ 0$ such that $f(v)\prec 0$. When $f(\cdot)$ is {\em homogeneous}, this result is extended to arbitrary initial conditions $x^0\in \R^d$ by O. Mason and M. Verwoerd \cite{Mason}.

Due to the usefulness of fractional calculus compared to classical analysis in modelling many processes that emerged from different fields of science and engineering (see, e.g., \cite{Baleanu_1,Baleanu_2,Petras,Tarasov_1,Tarasov_2}), our aim in the present work is to study the asymptotic behaviour of solutions to fractional-order systems where homogeneous and cooperative assumptions are satisfied. We note that in this case, the existence and uniqueness of solutions have not been investigated in the literature. On the other hand, the approaches using the comparison principle based on the geometric interpretation of the classical derivative and the local nature of solutions as in the two papers mentioned above do not seem to be applicable.

The article is organized as follows. Notation and some mathematical background are introduced in Section 2. The main content of the paper is presented in Section 3. In particular, in this part, we first show the boundedness of solutions to some classes of multi-order fractional cooperative systems. After that, we prove the global attractivity and the convergence rate of solutions to such systems. As a consequence, we study an ecosystem model (fractional-order Lotka--Volterra type systems) and describe the convergence of solutions toward its non-trivial equilibrium point. Finally, numerical examples are provided in Section 4 to illustrate the proposed theoretical results.
\section{Notation and preliminaries}
\subsection{Notation}
In this paper, we use the following notations: $\N,\ \R$ are the sets of natural numbers, real
numbers, respectively; $\R_{\geq 0}:=\{x\in \R:x\geq 0\}$, $\R_+:=\{x\in \R:x> 0\}$; $\R^d$ stands for the $d$-dimensional
Euclidean space; $\R_{\geq 0}^d$, $\R_+^d$ are the subsets of $\R^d$ with
nonnegative entries and positive entries, respectively.
Let $x,y\in\R^d$, then $[x;y]:=\{s\in\R^d: s=tx+(1-t)y,t\in [0,1]\}$. For two vectors $w,u\in \R^d$, we write
\begin{itemize}
	\item $u\succeq w$ if $u_i\geq w_i$ for all $1\leq i\leq d.$  
	
	\item $u\succ w$ if $u_i> w_i$ for all $1\leq i\leq d.$
\end{itemize}
Let $r>0$, we set $B_r(0):=\{x\in \R^d:\|x\|\leq r\}$ and  $\partial B_r(0):=\{x\in \R^d:\|x\|= r\}$. For a vector-valued function  $f: \R^d \longrightarrow \R^d$ which is differentiable at $x\in \R^d$, we denote $Df(x):=\big(\frac{\partial f_i}{\partial x_j}(x)\big)_{1\leq i,j\leq d}$.
Fixing a vector $v\succ 0$, the weighted  norm $\|.\|_v$ on $\R^d$ is defined by $\|w\|_v:=\max_{1\le i \le d} \frac{|w_i|}{v_i}.$
A real matrix $A=(a_{ij})_{1\leq i,j\leq d}$ is Metzler if its off-diagonal entries $a_{ij},\ \forall i\ne j$, are nonnegative. 

Let $\alpha \in (0,1]$ and $J = [0, T]$, the Riemann-Liouville fractional integral of a function $x :J \rightarrow \mathbb R$ is denoted by 
	$$ 
		I^\alpha_{0^+}x(t) := \frac{1}{\Gamma(\alpha)}\int_{0}^{t}(t-s)^{\alpha -1}x(s) \, \du s,\ \quad t\in J,
	$$
	and the Caputo fractional derivative of the order $\alpha$ is given by
	$$ 
		^C D^\alpha_{0^+}x(t) := \frac{d}{d t}I^{1-\alpha}_{0^+}(x(t) - x(0)), \quad t \in J \setminus \{ 0 \},
	$$
	here $\Gamma(\cdot)$ is the Gamma function and $\displaystyle\frac{\du}{\du t}$ is the classical derivative (see, e.g., 
	\cite[Chapters 2 and 3]{Kai} and \cite{Vainikko_16} for more detail on fractional calculus). For $d\in\N,\ \hat{\alpha}:=\left(\alpha_1,\dots, \alpha_d\right)\in (0,1]^d$ and a function $w : J \rightarrow \R^d,$ we use the notation
$$^{\!C}D^{\hat{\alpha}}_{0^+}w(t):=\left(^{\!C}D^{\alpha_1}_{0^+}w_1(t),\dots,^{\!C}D^{\alpha_d}_{0^+}w_d(t)\right)^{\rm T}.$$
\begin{definition} \cite[Definition 2.3]{Mason}
	A vector field $f: \R^d \longrightarrow \R^d$ is said to be homogeneous
	 if for all $x\in \R^d$ and for all $\lambda >0$, we have
	$$f(\lambda x)=\lambda f(x).$$
\end{definition}
\begin{definition} \cite[Definition 3]{Qiang-xiao}
	A vector field $f: \R^d \longrightarrow \R^d$ is called homogeneous
	of degree $p>0$ if for all $x\in \R^d, \lambda >0$ we have
	$$f(\lambda x)=\lambda^pf(x).$$
\end{definition}
\begin{definition} \cite[Definition 2]{Qiang-xiao}
	A continuous vector field $f: \R^d \longrightarrow \R^d$ which is
	continuously differentiable on $\R^d\backslash \left\{0\right\}$ is said to be cooperative if the
	Jacobian matrix $Df(x)$ is Metzler for all $x\in \R_{\geq 0}^d\backslash \left\{0\right\}$.
\end{definition}
Let $\hat{\alpha}=(\alpha_1,\dots,\alpha_d)^{\rm T}\in (0,1]^d$. Our main object in the paper is the fractional-order nonlinear system
\begin{equation} \label{Eq main}
	\begin{cases}
		^{\!C}D^{\hat{\alpha}}_{0^+}w(t)&=f(w(t)),\ \forall t> 0,\\
		w(0)&=\omega \in \R^d_{\geq 0},
	\end{cases}
\end{equation}
where $f = (f_1,\dots, f_d)^{\rm T}$ with $f_i:\R^d \rightarrow \R ,\ i = 1,\dots, d,$ satisfies some following assumptions.
\begin{itemize}
		\item[(A1)] $f(\cdot)$  is cooperative.
		\item[(A2)] $f(\cdot)$ is homogeneous of degree $p\geq 1$.
\item[(A3)] There exists $v\succ 0$ such that $f(v)\prec 0$.
	\end{itemize}
Following from Proposition \ref{glcd1} and Proposition \ref{loclc} below, for each $\omega\in\R^d_{\geq 0}$, the system \eqref{Eq main} has a unique solution $\Phi(\cdot,\omega)$ on the maximal interval of existence $[0, T_{\max}(\omega))$. 
\begin{definition}
	System \eqref{Eq main} is strictly monotone if for any $\lambda^1,\lambda^2\in \R^d_{+}$, $\lambda^1\prec \lambda^2$, we have
\[
\Phi(t,\lambda^1)\prec \Phi(t,\lambda^2),\;\forall t\in (0,T_{\max}(\lambda^1))\cap (0,T_{\max}(\lambda^2)).
\] 
\end{definition}
\begin{definition}
	System \eqref{Eq main} is monotone if for any $\lambda^1,\lambda^2\in \R^d_{\geq 0}$, $\lambda^1\preceq \lambda^2$, we have
\[
\Phi(t,\lambda^1)\preceq \Phi(t,\lambda^2),\;\forall t\in (0,T_{\max}(\lambda^1))\cap (0,T_{\max}(\lambda^2)).
\] 
\end{definition}
\begin{definition}
	System \eqref{Eq main} is positive if for any $\omega \succeq 0$, its solution $\Phi(\cdot, \omega)$ satisfies
	\begin{align*}
		\Phi(\cdot, \omega)\succeq 0\ \text{on}\ [0, T_{\max}(\omega)).
	\end{align*}
\end{definition}

\subsection{Preliminaries}
We collect here some preparatory knowledge that plays an essential role for further analysis in the rest of the paper.
\begin{proposition}\cite[Lemma 2.1]{Mason}\label{glcd1}
Suppose that $f: \R^d \longrightarrow \R^d$ is continuous and is
	continuously differentiable on $\R^d\backslash \left\{0\right\}$. Moreover, this function is homogeneous. Then, there exists a positive constant $K$ such that $\|f(x)-f(y)\|\leq K\|x-y\|$, $\forall x,y\in \R^d$. 
\end{proposition}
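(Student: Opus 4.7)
The plan is to exploit the fact that degree-1 homogeneity of $f$ forces $Df$ to be degree-0 homogeneous, which reduces global estimates on $\R^d\setminus\{0\}$ to bounds on the compact unit sphere $\partial B_1(0)$. Once those bounds are in hand, a mean-value-type argument on the segment $[x;y]$ finishes the job, except that one must handle carefully the possibility that the segment passes through the origin, where $Df$ may fail to extend continuously.

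First, I would record two structural consequences of homogeneity. Letting $\lambda\to 0^+$ in $f(\lambda x)=\lambda f(x)$ and using continuity gives $f(0)=0$, hence $f(x)=\|x\|\,f(x/\|x\|)$ for every $x\neq 0$. Differentiating the identity $f(\lambda x)=\lambda f(x)$ in $x$ yields $\lambda\, Df(\lambda x)=\lambda\, Df(x)$, so $Df(\lambda x)=Df(x)$ for all $\lambda>0$; in other words, $Df$ is constant along each open ray from the origin. Setting
\[
M:=\sup_{\xi\in\partial B_1(0)}\|f(\xi)\|,\qquad K_0:=\sup_{\xi\in\partial B_1(0)}\|Df(\xi)\|,
\]
the compactness of $\partial B_1(0)$ together with continuity of $f$ and of $Df$ on $\R^d\setminus\{0\}$ makes both quantities finite. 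Then $\|f(x)\|\le M\|x\|$ holds on all of $\R^d$, and $\|Df(z)\|\le K_0$ on $\R^d\setminus\{0\}$.

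Next I would fix $x,y\in\R^d$ and split into two cases according to whether the closed segment $[x;y]$ meets $0$. If $0\notin[x;y]$, then $Df$ is continuous on this compact segment, and the standard mean-value inequality for vector-valued maps gives
\[
\|f(x)-f(y)\|\le\sup_{z\in[x;y]}\|Df(z)\|\cdot\|x-y\|\le K_0\|x-y\|.
\]
If $0\in[x;y]$, then $x$ and $y$ lie on a common line through $0$ on opposite sides, so $\|x-y\|=\|x\|+\|y\|$; applying the triangle inequality together with the linear growth bound gives
\[
\|f(x)-f(y)\|\le\|f(x)\|+\|f(y)\|\le M(\|x\|+\|y\|)=M\|x-y\|.
\]
Choosing $K:=\max\{K_0,M\}$ yields the claim.

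The only genuine obstacle is the potential blow-up of $Df$ at $0$, which prevents a direct application of the mean value theorem across the origin. The homogeneity-based estimate $\|f(x)\|\le M\|x\|$ is the key device that circumvents this difficulty in the degenerate case, and it is precisely this step that would fail without the assumption of degree exactly $1$.
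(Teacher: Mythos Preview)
Your proof is correct. The paper does not supply its own proof of this proposition---it simply cites \cite[Lemma~2.1]{Mason}---so there is no in-paper argument to compare against directly.

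It is nonetheless worth noting that your approach differs in spirit from the paper's proof of the companion result Proposition~\ref{loclc} (the case $p>1$). There the authors avoid the origin by first working on segments that miss a half-line $\{tx:t\le 0\}$, extending by continuity, and then patching together estimates on $B_1(0)$ and on the annulus $B_r(0)\setminus B_1(0)$. You instead exploit the special feature of degree~$1$ homogeneity that $Df$ is globally bounded on $\R^d\setminus\{0\}$, which lets you dispose of the segment-through-origin case directly via the linear growth bound $\|f(x)\|\le M\|x\|$ together with $\|x-y\|=\|x\|+\|y\|$. This is cleaner for $p=1$ and, as you correctly observe in your closing remark, would not extend to $p>1$ because $\|Df(x)\|$ then scales like $\|x\|^{p-1}$.
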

\begin{proposition}\label{loclc}
Suppose that $f: \R^d \longrightarrow \R^d$ is continuous and is continuously differentiable on $\R^d\backslash \left\{0\right\}$. In addition, we assume that $f$ is homogeneous of degree $p>1$. Then, for any $r>0$, we can find a positive constant $K$ that depends on $r$ satisfying $\|f(x)-f(y)\|\leq K\|x-y\|$, $\forall x,y\in B_r(0)$. In particular, $f$ is Lipschitz continuous on balls centered at the origin and with arbitrary radius. 
\end{proposition}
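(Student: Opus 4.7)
The strategy is to exploit homogeneity to obtain a quantitative pointwise bound on the Jacobian $Df$ on balls, and then apply a mean-value argument while carefully handling the single point $0$ at which $f$ need not be differentiable.

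First, I would differentiate the identity $f(\lambda x) = \lambda^{p} f(x)$ with respect to $x$ for $x\neq 0$ and $\lambda>0$, obtaining $\lambda\, Df(\lambda x) = \lambda^{p}\, Df(x)$, which yields the scaling relation $Df(\lambda x) = \lambda^{p-1} Df(x)$. Since $Df$ is continuous on the compact unit sphere $\partial B_1(0)$, the quantity $M:=\sup\{\|Df(z)\|:z\in\partial B_1(0)\}$ is finite. For any $x\in\R^{d}\setminus\{0\}$, writing $x=\|x\|(x/\|x\|)$ and applying the scaling relation with $\lambda=\|x\|$ gives $\|Df(x)\|\le M\|x\|^{p-1}$. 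In particular, on $B_r(0)\setminus\{0\}$ we have $\|Df(x)\|\le Mr^{p-1}$.

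Next, fix $x,y\in B_r(0)$ and consider $g(t):=f((1-t)x+ty)$ on $[0,1]$. The function $g$ is continuous (by continuity of $f$), and $C^{1}$ at every $t\in[0,1]$ for which $(1-t)x+ty\neq 0$, with $\|g'(t)\|\le Mr^{p-1}\|x-y\|$ by the previous step and the fact that $\|(1-t)x+ty\|\le r$. Since the segment $[x;y]$ crosses the origin in at most one point $t_0\in[0,1]$, the standard mean value inequality applied separately on $[0,t_0]$ and $[t_0,1]$ (using continuity of $g$ at $t_0$ to extend the $C^{1}$ estimates up to $t_0$) gives
\[
\|f(x)-f(y)\|=\|g(1)-g(0)\|\le \|g(1)-g(t_0)\|+\|g(t_0)-g(0)\|\le Mr^{p-1}\|x-y\|,
\]
so that $K:=Mr^{p-1}$ is admissible. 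If the segment $[x;y]$ avoids the origin, the argument collapses to a single application of the mean value inequality on $[0,1]$. The fact $f(0)=0$ (forced by homogeneity of positive degree) is implicitly used to make the splitting above compatible with continuity.

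The main obstacle is not deep but must be handled with care: $Df$ is not defined at the origin, so a direct mean value argument on the whole segment $[x;y]$ is unavailable when that segment passes through $0$. The homogeneity bound $\|Df(x)\|\le M\|x\|^{p-1}$ with $p>1$ is exactly what makes $Df$ (locally) bounded near the origin, enabling the splitting at $t_0$. The reason one only obtains \emph{local} Lipschitz continuity (as opposed to the global estimate of Proposition \ref{glcd1}) is that the bound $Mr^{p-1}$ grows unboundedly as $r\to\infty$ precisely when $p>1$.
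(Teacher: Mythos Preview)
Your proof is correct. Both you and the paper start from the same key observation: differentiating the homogeneity identity yields $Df(\lambda x)=\lambda^{p-1}Df(x)$, whence $\|Df(x)\|\le M\|x\|^{p-1}$ with $M=\sup_{\partial B_1(0)}\|Df\|$. From there the arguments diverge. You bound $\|Df\|$ uniformly by $Mr^{p-1}$ on $B_r(0)\setminus\{0\}$ and apply the mean value inequality to the parametrized segment $g(t)=f((1-t)x+ty)$, handling the single possible crossing of the origin by splitting at $t_0$ and invoking continuity of $g$ there. The paper instead first establishes the Lipschitz estimate on $B_1(0)$ (by excluding the half-line $\{tx:t\le0\}$ so that the segment avoids the origin, and then extending by continuity), introduces a second constant $K_2:=\sup_{B_r(0)\setminus B_1(0)}\|Df\|$, and runs a four-case analysis depending on which of $x,y$ lie inside or outside $B_1(0)$, inserting intersection points with $\partial B_1(0)$ when the segment crosses it. Your route is more economical---one constant, at most one split---and produces the explicit Lipschitz constant $K=Mr^{p-1}$; the paper's decomposition is more elaborate but reaches the same conclusion with $K=\max\{K_1,K_2\}$.
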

\begin{proof}
Due to the fact that $f$ is continuously differentiable on $\R^d\backslash \left\{0\right\}$, we have $$K_1:=\sup_{x\in \partial B_1(0)}\|Df(x)\|<\infty.$$ Furthermore, based on the assumption that $f$ is homogeneous of degree $p>1$ on $\R^d$, we see that $Df(\lambda x)=\lambda^{p-1}Df(x)$ for all $x\in\R^d\backslash \left\{0\right\}$ and $\lambda>0$. Hence,
\begin{align}\label{glc}
\notag\|Df(x)\|&=\|x\|^{p-1}\|Df(\frac{x}{\|x\|})\|\\
&\leq K_1\|x\|^{p-1},\; \forall x\in \R^d\backslash \left\{0\right\}.
\end{align}
Choose any $x\in B_1(0)\backslash \left\{0\right\}$ and then fix it, by the mean value theorem, we obtain the following estimate
\begin{equation*}
\|f(x)-f(y)\|\leq \|Df(\theta)\|\|x-y\|,\;\forall y\in B_1(0)\setminus\{tx:t\leq 0\},
\end{equation*}
where $\theta \in [x;y]$, which together with \eqref{glc} implies that
\begin{align}\label{glc1}
\notag\|f(x)-f(y)\|&\leq \|Df(\theta)\|\|x-y\|\\
\notag&\leq K_1\|\theta\|^{p-1}\|x-y\|\\
&\leq K_1\|x-y\|,\;\forall y\in B_1(0)\setminus\{tx:t\leq 0\}.
\end{align}
However, from the continuity of $f(\cdot)$ on $\R^d$, it follows that the inequality \eqref{glc1} is true for any $y\in B_1(0)$. Notice that $x$ is arbitrarily in $B_1(0)\backslash \left\{0\right\}$, thus this estimate holds for every $y\in B_1(0)$, $x\in B_1(0)\backslash \left\{0\right\}$. Using the continuity of the function $f(\cdot)$ again, we get \eqref{glc1} for all $x,y\in B_1(0)$. This means that
\begin{equation}\label{add_r1_1}
  \|f(x)-f(y)\|\leq K_1\|x-y\|,\;\forall x,y\in B_1(0).  
\end{equation}
We now consider the case $x,y\in B_r(0)$ with $r>1$. There are four cases: I. $x,y\in B_r(0)\setminus B_1(0)$; II. $x \in B_r(0)\setminus B_1(0)$ and $y\in B_1(0)$; III. $y\in B_r(0)\setminus B_1(0)$ and $x\in B_1(0)$; IV. $x,y\in B_1(0)$. The estimate for case IV is shown above. For case I, if $[x;y]\cap \partial B_1(0)=\emptyset$, then
\begin{equation}\label{glc2}
\|f(x)-f(y)\|\leq K_2\|x-y\|,
\end{equation}
where $K_2:=\sup_{x\in B_r(0)\setminus B_1(0)}\|Df(x)\|<\infty$. Notice that the estimate \eqref{glc2} is also true for $x\in B_r(0)\setminus B_1(0)$, $y\in \partial B_1(0)$ or $y\in B_r(0)\setminus B_1(0)$, $x\in \partial B_1(0)$. Suppose that $[x;y]\cap \partial B_1(0)=\{x_1,y_1\}$. Then,
\begin{align}\label{glc3}
\|f(x)-f(y)\|&=\|f(x)-f(x_1)+f(x_1)-f(y_1)+f(y_1)-f(y)\|\notag\\
&\leq K_2\|x-x_1\|+K_1\|x_1-y_1\|+K_2\|y_1-y\|\notag\\
&\leq K(\|x-x_1\|+\|x_1-y_1\|+\|y_1-y\|)\notag\\
&=K\|x-y\|,
\end{align}
where $K:=\max\{K_1,K_2\}$. For case II, let $\{x_1\}=[x;y]\cap \partial B_1(0)$. It is easy to see
\begin{align}\label{glc4}
\|f(x)-f(y)\|&=\|f(x)-f(x_1)+f(x_1)-f(y)\|\notag\\
&\leq K_2\|x-x_1\|+K_1\|x_1-y\|\notag\\
&\leq K(\|x-x_1\|+\|x_1-y\|)\notag\\
&=K\|x-y\|.
\end{align}
By the same arguments as in the proof of case II, for case III, we also have
\begin{equation*}
\|f(x)-f(y)\|\leq K\|x-y\|.
\end{equation*}
In short, based on the obtained observations \eqref{add_r1_1}, \eqref{glc2}, \eqref{glc3} and \eqref{glc4}, for any $r>0$, we have proved that $\|f(x)-f(y)\|\leq K \|x-y\|$ for all $x,y\in B_r(0)$, where the positive constant $K$ depends on $r$. The proof is complete.
\end{proof}
\begin{proposition} {\cite[Remark 1.1, Chapter 3, p. 33]{H.L.Smith}} \label{Pro_Cooperative}
	Let $f: \R^d \longrightarrow \R^d$ be a
	cooperative vector field. For any two vectors $u,w\in \R_{\geq 0}^d$ with $u_i=w_i,\; i\in\{1,\cdots,d\}$
	and $u \succeq w$, we have $$f_i(u)\ge f_i(w).$$
\end{proposition}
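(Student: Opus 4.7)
The plan is to join $w$ and $u$ by the line segment $\gamma(s) := (1-s)w + su$, $s \in [0,1]$, and apply the fundamental theorem of calculus to the scalar map $s \mapsto f_i(\gamma(s))$, where $i$ is the index from the hypothesis. The crucial observation is that the equality $u_i = w_i$ forces the $i$-th coordinate of $\gamma$ to be constant, i.e.\ $\gamma_i(s) \equiv w_i$ for all $s$. Moreover, since $\gamma(s)$ is a convex combination of vectors in $\R^d_{\geq 0}$, we have $\gamma(s) \in \R^d_{\geq 0}$ throughout; and $\gamma(s) = 0$ can occur only at $s=0$ (if $w = 0$) or at $s=1$ (which combined with $u \succeq w$ again forces $w = 0$).

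Suppose first that $w \neq 0$, so that $\gamma(s) \neq 0$ on all of $[0,1]$ and $f_i$ is $C^1$ along $\gamma$. The fundamental theorem of calculus then yields
\[
f_i(u) - f_i(w) = \int_0^1 \sum_{j=1}^d \frac{\partial f_i}{\partial x_j}(\gamma(s)) \, (u_j - w_j) \, ds.
\]
The $j=i$ summand vanishes because $u_i - w_i = 0$. For each $j \neq i$, the cooperative (Metzler) hypothesis gives $\frac{\partial f_i}{\partial x_j}(\gamma(s)) \geq 0$, while $u_j - w_j \geq 0$ by $u \succeq w$. The integrand is therefore nonnegative, and we conclude $f_i(u) \geq f_i(w)$.

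It remains to handle the degenerate case $w = 0$ (which forces $w_i = u_i = 0$). I would resolve it by a perturbation along the $i$-th axis: set $w^{(\eps)} := \eps \mathbf{e}_i$ and $u^{(\eps)} := u + \eps \mathbf{e}_i$. Then $u^{(\eps)} \succeq w^{(\eps)}$, their $i$-th coordinates agree at $\eps > 0$, and the corresponding segment has $i$-th coordinate identically $\eps$, so it stays away from the origin. The main argument above yields $f_i(u^{(\eps)}) \geq f_i(w^{(\eps)})$, and letting $\eps \to 0^+$ together with the continuity of $f_i$ on $\R^d$ completes the proof. The only genuine obstacle is this interaction of the segment with the origin, where $Df$ need not be defined; the perturbation along $\mathbf{e}_i$ is engineered precisely to sidestep the singularity while keeping all hypotheses intact.
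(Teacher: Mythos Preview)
The paper does not give its own proof of this proposition; it is simply quoted from Smith's monograph \cite{H.L.Smith} and used as a black box. Your argument is correct and is essentially the standard one: integrate the directional derivative of $f_i$ along the segment from $w$ to $u$, drop the $j=i$ term via $u_i=w_i$, and use the Metzler sign condition on the remaining terms. Your handling of the singularity at the origin via the $\eps\mathbf{e}_i$ perturbation is clean and exactly what is needed given that $Df$ is only assumed to exist on $\R^d\setminus\{0\}$.
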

\begin{lemma}\label{compare-FDE}
	Let $w : [0, T] \rightarrow \R$ be continuous and assume that the Caputo derivative
	$^{\!C}D^{\alpha}_{0^+}w(\cdot)$ is also continuous on the interval $[0, T]$ with $\alpha\in (0,1]$. If there exists $t_0>0$ such that $w(t_0)=0$ and $w(t)<0,\ \forall t\in [0,t_0)$, then 
 \begin{itemize}
     \item[(i)] $^{\!C}D^{\alpha}_{0^+}w(t_0)> 0$ for $0<\alpha<1$;
     \item[(ii)] $^{\!C}D^{\alpha}_{0^+}w(t_0)\geq 0$ for $\alpha=1$.
 \end{itemize}
\end{lemma}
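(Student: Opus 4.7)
For case (ii) with $\alpha=1$, the Caputo derivative reduces to the ordinary derivative $w'$, which is continuous by hypothesis. Since $w(t) < 0 = w(t_0)$ for $t \in [0,t_0)$, the left-sided difference quotients satisfy
\[
\frac{w(t) - w(t_0)}{t - t_0} = \frac{w(t)}{t - t_0} \ge 0, \qquad t \in [0,t_0),
\]
(negative divided by negative), and taking $t \to t_0^-$ yields $w'(t_0) \ge 0$, which is exactly $^{\!C}D^{1}_{0^+} w(t_0) \ge 0$.

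For case (i) with $\alpha \in (0,1)$, the strategy is to rewrite the Caputo derivative in Marchaud form. Concretely, I would establish
\[
^{\!C}D^{\alpha}_{0^+} w(t) \;=\; \frac{w(t) - w(0)}{\Gamma(1-\alpha)\,t^{\alpha}} \;+\; \frac{\alpha}{\Gamma(1-\alpha)} \int_{0}^{t} \frac{w(t) - w(s)}{(t-s)^{\alpha+1}}\,ds, \qquad t \in (0,T],
\]
by starting (for the moment with $w\in C^1[0,T]$) from $^{\!C}D^{\alpha}_{0^+} w(t) = \frac{1}{\Gamma(1-\alpha)} \int_0^t (t-s)^{-\alpha} w'(s)\,ds$, writing the integrand as $(t-s)^{-\alpha}\,d(w(s)-w(t))$, and integrating by parts against the primitive $-(t-s)^{1-\alpha}/(1-\alpha)$. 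The boundary term at $s=t$ vanishes because $w(s)-w(t) = O(t-s)$ cancels the $(t-s)^{1-\alpha}$ factor, and the boundary term at $s=0$ produces the first summand above. The identity then extends to the regularity of the lemma by a mollification argument, using that $^{\!C}D^{\alpha}_{0^+} w \in C[0,T]$ is equivalent to $w - w(0) \in I^{\alpha}_{0^+}(C[0,T])$, a class on which both sides depend continuously.

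Once the representation is in place, I would set $t = t_0$ and substitute $w(t_0) = 0$ to obtain
\[
^{\!C}D^{\alpha}_{0^+} w(t_0) \;=\; \frac{-w(0)}{\Gamma(1-\alpha)\,t_0^{\alpha}} \;+\; \frac{\alpha}{\Gamma(1-\alpha)} \int_{0}^{t_0} \frac{-w(s)}{(t_0-s)^{\alpha+1}}\,ds.
\]
Both terms on the right are strictly positive: $-w(0) > 0$ since $0 \in [0,t_0)$, and the integrand $-w(s)/(t_0-s)^{\alpha+1}$ is strictly positive on $[0,t_0)$. Consequently $^{\!C}D^{\alpha}_{0^+} w(t_0) > 0$, as claimed.

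The main obstacle is the rigorous justification of the Marchaud representation at the weak regularity assumed: since $w$ is only continuous, the integral $\int_0^{t_0} (w(t_0)-w(s))/(t_0-s)^{\alpha+1}\,ds$ need not be absolutely convergent, and the integration by parts producing it must be controlled by cancellation. My primary plan is to prove the formula first on the dense subclass $C^1[0,T]$ and then pass to the limit along a sequence of mollifications $w_n \to w$ for which $^{\!C}D^{\alpha}_{0^+} w_n \to {^{\!C}D^{\alpha}_{0^+} w}$ uniformly; as a back-up I would argue by contradiction, supposing $^{\!C}D^{\alpha}_{0^+} w(t_0) \le 0$ and combining continuity of the Caputo derivative near $t_0$ with the reconstruction $w(t) = w(0) + I^{\alpha}_{0^+}(^{\!C}D^{\alpha}_{0^+} w)(t)$ to force $w$ to reach $0$ strictly before $t_0$, contradicting the hypothesis.
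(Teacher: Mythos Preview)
Your argument is correct in outline, and the Marchaud representation you invoke is precisely the engine behind the result. The paper itself does not give a self-contained proof for case~(i): it simply cites \cite[Theorem~1]{Vainikko_16}, and that theorem establishes (under the hypothesis that $w$ and $^{\!C}D^{\alpha}_{0^+}w$ are continuous) exactly the kind of pointwise representation you write down, together with the regularity needed to make the singular integral meaningful. So you are effectively unpacking the reference rather than taking a different route.

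One comment on the technical point you flag. Under the standing hypothesis $^{\!C}D^{\alpha}_{0^+}w\in C[0,T]$ one has $w(t)=w(0)+I^{\alpha}_{0^+}g(t)$ with $g$ continuous, hence $|w(t_0)-w(s)|=O((t_0-s)^{\alpha})$ near $s=t_0$; this is not enough for absolute convergence of $\int_0^{t_0}(t_0-s)^{-\alpha-1}(w(t_0)-w(s))\,ds$, so the Marchaud integral really must be read as the limit of truncations $\int_0^{t_0-\varepsilon}$. That limit does exist and equals the Caputo derivative---this is the content of Vainikko's theorem---but your mollification plan alone does not immediately deliver it, because uniform convergence $w_n\to w$ does not by itself control the singular integral. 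The cleanest fix is to integrate by parts on $[0,t_0-\varepsilon]$ starting from the Riemann--Liouville form $\frac{d}{dt}I^{1-\alpha}_{0^+}(w-w(0))$ (which needs no derivative of $w$), obtain
\[
^{\!C}D^{\alpha}_{0^+}w(t_0)=\lim_{\varepsilon\downarrow 0}\Bigl[\frac{-w(0)}{\Gamma(1-\alpha)t_0^{\alpha}}+\frac{w(t_0-\varepsilon)}{\Gamma(1-\alpha)\varepsilon^{\alpha}}+\frac{\alpha}{\Gamma(1-\alpha)}\int_0^{t_0-\varepsilon}\frac{-w(s)}{(t_0-s)^{\alpha+1}}\,ds\Bigr],
\]
and then observe that each term in brackets is strictly positive for every $\varepsilon>0$ (using $w(t_0-\varepsilon)<0$ only in the numerator of the middle term---wait, that term is \emph{negative}). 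Actually this shows the subtlety: the middle boundary term $w(t_0-\varepsilon)/\varepsilon^{\alpha}$ is negative and may diverge to $-\infty$, while the integral diverges to $+\infty$; the two must be combined, and it is exactly the $O((t_0-s)^{\alpha})$ bound above that makes their sum converge. So your identification of the obstacle is on target, and the resolution genuinely requires the regularity theorem you would be citing in any case. Your back-up contradiction argument via $w=w(0)+I^{\alpha}_{0^+}({}^{\!C}D^{\alpha}_{0^+}w)$ is a viable alternative and avoids the Marchaud form entirely.
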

\begin{proof}
The conclusion of the case $\textup{(ii)}$ is obvious. The proof of the case $\textup{(i)}$ follows directly from \cite[Theorem 1]{Vainikko_16}.
\end{proof}
\begin{remark}
A weaker version of Lemma \ref{compare-FDE} was introduced in \cite[Lemma 25]{Tuan}.  
\end{remark}
\section{Asymptotic behavior of solutions to fractional-order cooperative systems}
This section represents our main contributions. First, we show the boundedness of solutions to multi-order fractional cooperative homogeneous systems. We then prove the global attractivity and the convergence rate of solutions to such systems. Finally, we obtain the convergence of solutions toward a non-trivial equilibrium point of a fractional-order Lotka-Volterra type model.
\subsection{Boundedness and positivity of solutions to cooperative systems}
\begin{proposition} \label{boundsol}
Consider the system \eqref{Eq main}. Suppose that $f(\cdot)$ satisfies the assumptions $\textup{(A1)}$, $\textup{(A2)}$. In addition, there exists a vector $v \succ 0$ such that $\textup{(A3)}$ is true. Then, for any $\omega\succ 0$, the solution $\Phi(\cdot,\omega)$ exists on $[0,\infty)$. Moreover, we have
\[\|\Phi(t,\omega)\|_v\leq \|\omega\|_v,\;\forall t\geq 0.\]
\end{proposition}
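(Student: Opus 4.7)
The plan is to establish the a priori bound $\|\Phi(t,\omega)\|_v \leq \|\omega\|_v$ in two stages -- first positivity $\Phi(t,\omega) \succeq 0$, then an upper bound $\Phi(t,\omega) \preceq \|\omega\|_v\, v$ -- and then deduce $T_{\max}(\omega)=\infty$ from boundedness, using the Lipschitz estimates of Proposition \ref{glcd1} or Proposition \ref{loclc} together with the standard continuation theorem for Caputo fractional differential equations. Both stages follow the same template: at a hypothetical first contact time, Lemma \ref{compare-FDE} pins down the sign of the Caputo derivative of an appropriate scalar auxiliary function, while the cooperativity of $f$ (Proposition \ref{Pro_Cooperative}) combined with (A2) and (A3), or with the observation $f(0)=0$, pins down the opposite sign of $f_i$. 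The identity $f(0)=0$ follows from (A2) with $p\geq 1$, since $f(0)=f(\lambda\cdot 0)=\lambda^p f(0)$ for every $\lambda>0$.

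For positivity, since $\omega \succ 0$, if $\Phi \succeq 0$ fails anywhere on $[0,T_{\max}(\omega))$, continuity provides a first time $t_1>0$ and an index $i$ with $\Phi_i(t_1,\omega)=0$ and $\Phi_j(t,\omega)>0$ on $[0,t_1)$ for every $j$; in particular $\Phi(t_1,\omega)\in\R^d_{\geq 0}$. Applying Lemma \ref{compare-FDE} to the auxiliary $-\Phi_i$ gives $f_i(\Phi(t_1,\omega)) \leq 0$, strictly when $\alpha_i<1$. Applying Proposition \ref{Pro_Cooperative} to the pair $\Phi(t_1,\omega) \succeq 0$ with equal $i$-th coordinates yields $f_i(\Phi(t_1,\omega)) \geq f_i(0)=0$. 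These contradict each other in the fractional case; the boundary case $\alpha_i=1$ must be treated by the classical invariance of $\R^d_{\geq 0}$ for cooperative first-order ODEs.

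For the upper bound, since $\omega \preceq Mv$ (with $M:=\|\omega\|_v$) may be tight at some coordinates, I argue with an $\epsilon$-perturbed target: fix $\epsilon>0$, aim to show $\Phi(t,\omega) \prec (M+\epsilon)v$ on $[0,T_{\max}(\omega))$, and then let $\epsilon\to 0^+$. If strictness fails, let $t_0>0$ be the first contact time with $\Phi_i(t_0,\omega)=(M+\epsilon)v_i$ for some $i$; by the positivity already established and the definition of $t_0$, one has $0 \preceq \Phi(t_0,\omega) \preceq (M+\epsilon)v$, with equality only in the $i$-th coordinate. Applying Lemma \ref{compare-FDE} to $w(t):=\Phi_i(t,\omega)-(M+\epsilon)v_i$ yields $f_i(\Phi(t_0,\omega)) = {}^C D^{\alpha_i}_{0^+} w(t_0) \geq 0$, while Proposition \ref{Pro_Cooperative} combined with (A2) and (A3) yields $f_i(\Phi(t_0,\omega)) \leq f_i((M+\epsilon)v) = (M+\epsilon)^p f_i(v) < 0$. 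The strict negativity supplied by (A3) absorbs the weak inequality in the $\alpha_i=1$ case of Lemma \ref{compare-FDE}, so the contradiction goes through for all $\alpha_i\in(0,1]$. Combining the two estimates gives the weighted-norm bound, and then the Lipschitz-on-bounded-sets estimates of Proposition \ref{glcd1} and Proposition \ref{loclc} preclude finite-time blow-up and hence yield $T_{\max}(\omega)=\infty$. The main obstacle, as noted, is the $\alpha_i=1$ branch of the positivity step, which is classical but requires a separate invocation of first-order cooperative invariance.
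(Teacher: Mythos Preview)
Your upper-bound step is exactly the paper's argument: the same auxiliary function $y_i(t)=\Phi_i(t,\omega)/v_i-\|\omega\|_v-\epsilon$, the same first-contact time $t_*$, the same contradiction between Lemma~\ref{compare-FDE} and Proposition~\ref{Pro_Cooperative} combined with (A2)--(A3), and the same $\epsilon\to 0$ passage. The organization differs: the paper first invokes global Lipschitz continuity (Proposition~\ref{glcd1}) to get $T_{\max}=\infty$ when $p=1$, proves the upper bound, and only then (for $p>1$) uses the bound to force $T_{\max}=\infty$; positivity is \emph{not} proved here but deferred to Lemma~\ref{Sys positive}. You instead establish positivity first and use it to justify the application of Proposition~\ref{Pro_Cooperative} in the upper-bound step. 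This is a point in your favor: Proposition~\ref{Pro_Cooperative} is stated only for vectors in $\R^d_{\geq 0}$, so the paper's proof tacitly relies on $\Phi(t_*,\omega)\succeq 0$, which it has not verified at that stage.

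The one genuine gap in your sketch is the positivity step when $\alpha_i=1$. As you note, Lemma~\ref{compare-FDE}(ii) gives only $f_i(\Phi(t_1,\omega))\leq 0$, which combined with $f_i(\Phi(t_1,\omega))\geq f_i(0)=0$ yields no contradiction. Invoking ``classical invariance for cooperative ODEs'' does not close this, because in a mixed-order system the $i$-th equation is an ODE in $\Phi_i$ driven by the other (fractional) components, and the usual Nagumo-type argument would have to be carried out carefully in that setting. The paper avoids this difficulty entirely: in Lemma~\ref{Sys positive} it perturbs both the vector field by $\mathbf{e}/n$ and the initial datum by $\mathbf{e}/n$, so that at the first contact with zero one gets ${}^C D^{\alpha_i}_{0^+}(-\Phi^n_i)(t_1)\geq 0$ while $-f_i(\Phi^n(t_1))-1/n\leq -1/n<0$, a contradiction valid for \emph{all} $\alpha_i\in(0,1]$. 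Positivity of the unperturbed solution then follows by monotone convergence as $n\to\infty$. If you want your self-contained route to succeed, you should adopt this perturbation trick for the positivity stage rather than appealing to classical ODE invariance.
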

\begin{proof}
{\bf The case: $p=1$.} Based on Proposition \ref{glcd1}, the vector field $f(\cdot)$ is global Lipschitz continuous on $\R^d$. It leads to that,
for every $\omega\succ 0$, the system \eqref{Eq main} has the unique global solution $\Phi(t,\omega)$ on $[0,\infty)$. 
Let ${\epsilon} >0$ be arbitrary. For each $i=1,\dots,d$, we define $$y_i(t):=\frac{{\Phi}_i(t,\omega)}{v_i} - \|\omega\|_v-{\epsilon},\ \forall t\ge0.$$
	Notice that $$y_i(0)=\frac{w_i}{v_i} - \|\omega\|_v-{\epsilon}<0,\ \forall i=\overline{1,d}.$$
Thus, if there is a $t>0$ and an index $i$ with  $y_i(t)=0$, by choosing 
	\begin{align*} 
		t_*:=\inf\{t>0:\exists i= \overline{1,d}\ \text{such that}\ y_{i}(t)=0\},
	\end{align*}  
	then $t_*>0$ and there exists an index $ i^*$ which verify
	\begin{align} \label{t_*^}
		y_{i^*}(t_*)&=0\ \text{and}\ y_{i}(t_*)\le0,\ \forall i\ne i^*, \\
		y_{i^*}(t)&<0,\ \forall t\in[0,t_*). \notag
	\end{align}
This implies that 
\begin{align} 
		{\Phi}_{i^*}(t_*,\omega)&=(\|\omega\|_v+{\epsilon})v_{i^*},\;\Phi_{i^*}(t,\omega)<(\|\omega\|_v+{\epsilon})v_{i^*},\;\forall t\in [0,t_*),\label{estbd1}\\
	{\Phi}_{i}(t_*,\omega)&\le(\|\omega\|_v+{\epsilon})v_{i},\ \forall i\ne i^*.\label{estbd2}
	\end{align}
By combining \eqref{t_*^} and Lemma \ref{compare-FDE}, we obtain \begin{equation}\label{contrabd}
^{C\!}D^{\alpha_{i^*}}_{0^+}y_{i^*}(t_*)\geq 0.\end{equation}
On the other hand, following from \eqref{estbd1}, \eqref{estbd2} and Proposition \ref{Pro_Cooperative}, we observe that
	\begin{align*}
		^{C\!}D^{{\alpha_{i^*}}}_{0^+}y_{i^*}(t_*)&=\frac{^{C\!}D^{\alpha_{i^*}}_{0^+}\Phi_{i^*}(t_*,\omega)}{v_{i^*}} \\
		&=\frac{1}{v_{i^*}}f_{i^*}(\Phi(t_*,\omega)) \\
		&\le\frac{1}{v_{i^*}}f_{i^*}\left((\|\omega\|_v+{\epsilon})v\right)\\&=(\|\omega\|_v+{\epsilon})\frac{f_{i^*}(v)}{v_{i^*}} <0,
	\end{align*}
which contradicts \eqref{contrabd}. This means that $y_i(t)<0$ all $t\geq 0$ and for all $i=1,\dots,d$. Hence,
$$\frac{{\Phi}_i(t,\omega)}{v_i} < \|\omega\|_v+{\epsilon},\ \forall t\ge0,\ i=1,\dots,d.$$
Let ${\epsilon}\to 0$, we have 
$$\frac{{\Phi}_i(\cdot,\omega)}{v_i} \le \|\omega\|_v,\ \forall t\ge0,\ i=1,\dots,d.$$ The desired estimate is checked.

\noindent {\bf The case: $p>1$}. Under Proposition \ref{loclc}, the vector-valued function $f(\cdot)$ is Lipschitz continuous on $B_r(0)$ for any $r>0$. 
Thus, for any initial condition $\omega  \succ 0$, 
the system  \eqref{Eq main} has a unique solution $\Phi(\cdot,\omega)$ on the maximal interval of existence $[0,T_{\max}(\omega))$. 
Now, by using the same arguments as in the proof of the case $p=1$, it is not difficult to show that
\begin{equation}\label{tam}
\|\Phi(t,\omega)\|_v\leq \|\omega\|_v,\;\forall t\in [0,T_{\max}(\omega)).
\end{equation}
However, in light of \eqref{tam} and the definition of the maximal interval of existence, it must be true that $T_{\max}(\omega)=\infty$ because otherwise the solution $\Phi(\cdot,\omega)$ can be extended over a larger interval. The proof of the theorem is complete.
\end{proof}
\begin{lemma}\label{Sys positive}
Consider the system \eqref{Eq main}. Suppose that the assumptions $\textup{(A1)}$, $\textup{(A2)}$ and $\textup{(A3)}$ are satisfied. Then, the system \eqref{Eq main} is positive.
\end{lemma}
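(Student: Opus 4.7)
The plan is to perturb both the initial datum and the vector field so as to produce a strictly inward-pointing flow on the boundary of $\R^d_{\geq 0}$, run a first-crossing argument, and then pass to the limit. I would first note that by homogeneity (A2) with $p\geq 1$, the identity $f(\lambda x)=\lambda^p f(x)$ at $x=0$ (any $\lambda>1$) forces $f(0)=0$.

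For each $\epsilon>0$, I would introduce the perturbed IVP
\[
{}^{\!C}D^{\hat{\alpha}}_{0^+}z(t)=f(z(t))+\epsilon\,\mathbf{1},\qquad z(0)=\omega+\epsilon v,
\]
where $v\succ 0$ is the vector from (A3) and $\mathbf{1}=(1,\dots,1)^{\rm T}$. The right-hand side inherits Lipschitz continuity on bounded sets from Propositions \ref{glcd1}--\ref{loclc}, so a unique solution $z^\epsilon$ exists on a maximal interval $[0,T^\epsilon)$.

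I would then show $z^\epsilon(t)\succ 0$ on $[0,T^\epsilon)$ by the first-exit technique of Proposition \ref{boundsol}. If the claim fails, continuity provides a first time $t_\ast>0$ and an index $i^\ast$ with $z^\epsilon_{i^\ast}(t_\ast)=0$, $z^\epsilon_{i^\ast}(t)>0$ on $[0,t_\ast)$, and $z^\epsilon_j(t_\ast)\geq 0$ for $j\neq i^\ast$. Lemma \ref{compare-FDE} applied to $-z^\epsilon_{i^\ast}$ yields ${}^{\!C}D^{\alpha_{i^\ast}}_{0^+}z^\epsilon_{i^\ast}(t_\ast)\leq 0$, hence $f_{i^\ast}(z^\epsilon(t_\ast))\leq -\epsilon$. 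On the other hand, Proposition \ref{Pro_Cooperative} with $u=z^\epsilon(t_\ast)\succeq 0$ and $w=0$ (which agree in the $i^\ast$-th coordinate) gives $f_{i^\ast}(z^\epsilon(t_\ast))\geq f_{i^\ast}(0)=0$, a contradiction.

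To conclude, I would invoke continuous dependence of the equivalent Volterra reformulation on the data: on any compact subinterval $[0,T]\subset[0,T_{\max}(\omega))$, for all sufficiently small $\epsilon$ the interval $[0,T^\epsilon)$ contains $[0,T]$ and $z^\epsilon\to\Phi(\cdot,\omega)$ uniformly on $[0,T]$. Passing to the limit $\epsilon\to 0^+$ in $z^\epsilon(t)\succ 0$ yields $\Phi(t,\omega)\succeq 0$, as required. I expect the main obstacle to be the borderline case $\alpha_{i^\ast}=1$, where Lemma \ref{compare-FDE}(ii) only delivers a non-strict sign at $t_\ast$; this is precisely why one perturbs the vector field (not only the initial datum, as in Proposition \ref{boundsol}), so that the strict positivity of $\epsilon$ opens the required gap at the contradiction step.
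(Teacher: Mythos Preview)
Your approach is correct and follows the same overall strategy as the paper---perturb both the vector field and the initial datum, obtain strict positivity of the perturbed trajectories via a first-crossing argument, and then pass to the limit---but the limit step is carried out differently. The paper takes a discrete sequence $\epsilon=1/n$, proves (by a second first-crossing argument using cooperativity) that the resulting solutions $\Phi^n(\cdot,\omega^n)$ are strictly decreasing in $n$, obtains a pointwise limit $\Psi^*$ by monotonicity, upgrades to uniform convergence on compacts, passes to the limit in the Volterra integral equation, and identifies $\Psi^*=\Phi(\cdot,\omega)$ by uniqueness. Your route replaces all of this by a direct appeal to continuous dependence on the data. Your version is shorter and avoids the extra monotonicity comparison; the paper's version is more self-contained (no external continuous-dependence lemma is needed) and, as a by-product, yields the monotonicity of the flow noted in the remark following the lemma. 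Either way, your observation that the $\epsilon$-shift in the vector field (not just in the initial condition) is what rescues the borderline case $\alpha_{i^\ast}=1$ is exactly the point, and your use of $f(0)=0$ together with Proposition~\ref{Pro_Cooperative} to close the contradiction is clean.
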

\begin{proof} Take and fix the initial condition $\omega\succeq 0$. Let $\Phi^n(\cdot,\omega^n)$ be the unique solution of the system
\begin{equation} \label{Eq main_n}
	\begin{cases}
		^{\!C}D^{\hat{\alpha}}_{0^+}x(t)&=f(x(t))+\displaystyle\frac{\textbf e}{n},\ \forall t>0,\\
		x(0)&=\omega^n,
	\end{cases}
\end{equation}
where $\omega^n=\omega+\displaystyle\frac{1}{n}\textbf{e}$ and  $\textbf{e}:=(1,\dots,1)^{\rm T} \in \R^d$, $n\in \N$. For each $n\in \mathbb N$, it follows from Proposition \ref{boundsol} that $\Phi^n(t,\omega^n)\succ 0$ for all $t\geq 0$.
Let $m,n\in \mathbb N$, $m>n$ and put $\Psi(t):=\Phi^m(t,\omega^m)-\Phi^n(t,\omega^n),\ \forall t\in [0, \infty).$ We first show that $\Psi(t)\prec 0$ for all $t\geq 0$. Indeed, if this statement is false, there exists a $t\in(0,\infty)$ and an index $i=1,\dots,d$ with $\Psi_{i}(t)=0$. Take
	\begin{align*}
		t_*:=\inf \{t>0:\exists i= \overline{1,d}\ \text{such that}\ \Psi_{i}(t)=0\}.
	\end{align*}  
Then, $t_*>0$ and there is an index $ i_*$ such that  
	\begin{align} \label{t_*}
		\Psi_{i_*}(t_*)&=0,\quad \Psi_{i}(t_*)\leq 0,\ i\ne i_*,\\
		\quad \Psi_{i}(t)&<0,\ \forall t\in[0,t_*),\ i=1,\dots,d. \notag
	\end{align}
Since $\Psi_{i_*}(t_*)=0$ and $\Psi_{i_*}(t)<0,\ \forall t\in [0,t_*)$, by Lemma \ref{compare-FDE}, it deduces that  $$^{\!C}D^{\alpha_{i_*}}_{0^+}\Psi_{i_*}(t_*)\geq 0.$$ 
On the other hand, from \eqref{t_*}, we have
	\begin{align*}
		\Phi_{i_*}^m(t_*,\omega^m)&=\Phi_{i_*}^n(t_*,\omega^n),\\
		\Phi_{i}^m(t_*,\omega^m)&\leq \Phi_{i}^n(t_*,\omega^n),\ \forall i\ne i_*,
	\end{align*}
	which together with Proposition \ref{Pro_Cooperative} implies $f_{i_*}(\Phi^m(t_*,\omega^m))\leq f_{i_*}(\Phi^n(t_*,\omega^n))$.  This leads to that 
	\begin{align*}
		^{\!C}D^{\alpha_{i_*}}_{0^+}\Psi_{i_*}(t_*)&={^{\!C}D^{\alpha_{i_*}}_{0^+}}\Phi_{i_*}^m(t_*,\omega^m)-	{^{\!C}D^{\alpha_{i_*}}}_{0^+}\Phi_{i_*}^n(t_*,\omega^n)\\
		&=f_{i_*}(\Phi^m(t_*,\omega^m))+\frac{1}{m}-f_{i_*}(\Phi^n(t_*,\omega^n))-\frac{1}{n}\\
		&< 0,
	\end{align*}
	a contradiction. This means that the sequence $\left\{\Phi^n(\cdot,\omega^n)\right\}_{n=1}^{\infty}$ is positive, strictly decreasing, continuous on $[0,\infty)$. Thus, for each $t\ge0$, the limit below exists 
$$\Psi^*(t):=\displaystyle\lim_{n\to\infty}\Phi^n(t,\omega^n).$$
It is clear to see that  $\left\{\Phi^n(\cdot,\omega^n)\right\}_{n=1}^{\infty}$
converges uniformly to $\Psi^*(\cdot)$ and $\Psi^*(\cdot)$ is also continuous and nonnegative on each interval $[0,T]$ with $T>0$ is arbitrary. On the other hand, for each $n \in \N$, we observe 
\begin{align*}
	\Phi_i^n(t,\omega^n)=\omega_i+\frac{1}{n}+\frac{t^{\alpha_i}}{n\Gamma(\alpha_i+1)}+\frac{1}{\Gamma(\alpha_i)}\int_0^t(t-s)^{\alpha_i-1}f_i(\Phi^n(s,\omega^n))ds
\end{align*}
with $t\in [0,\infty)$, $i=1,\dots,d$. For each $t\geq 0$, let  $n \rightarrow \infty$, we conclude 
\begin{align*}
	\Psi_i^*(t)=\omega_i+\frac{1}{\Gamma(\alpha_i)}\int_0^t(t-s)^{\alpha_i-1}f_i(\Psi^*(s))ds,\;\forall t\geq 0,\;i=1,\dots,d.
\end{align*} 
This together with the fact the system \eqref{Eq main} has a unique solution on $[0,\infty)$ deduces that $\Psi^*(t) = \Phi(t,\omega),\ \forall t\in[0,\infty)$. In particular, we have shown that $\Phi(t,\omega)\succeq 0$ for all $t\geq 0$ which finishes the proof.
\end{proof}
\begin{remark}
From the proof of Lemma \ref{Sys positive}, it is easy to see that the system \eqref{Eq main} is monotone.
\end{remark}
\begin{remark}\label{add_r_1}
The conclusion of Proposition \ref{boundsol} is still true when the initial condition $\omega\in\R^d_{\geq 0}$. 
\end{remark}
\subsection{Global attractivity and convergence rate of solutions to cooperative systems}
In this section, we discuss the attractivity and the convergence rate of solutions to the system \eqref{Eq main}
	\begin{equation*} 
		\begin{cases}
			^{\!C}D^{\hat{\alpha}}_{0^+}w(t)&=f(w(t)),\ \forall t> 0,\\
			w(0)&=\omega \in \R^d_{\geq 0}.
		\end{cases}
	\end{equation*}
	\begin{theorem} \label{Rate-p>1}
	Suppose that $f(\cdot)$ satisfies the assumptions $\textup{(A1)}$, $\textup{(A2)}$. If there exists $v\succ0$ so that $f(v)\prec0$, then for each $\omega\in\R^d_{\geq 0}$, we can find constants $\eta>0,\,C>0$ such that	
	\begin{equation}\label{ctest2}
		0\le \Phi_i(t,\omega)\leq  C E_{\underline{\alpha}/p}(-\eta t^{\underline{\alpha}/p}),\;\forall t\ge 0,\ i=1,\dots,d,
	\end{equation}
	where $\underline{\alpha}:=\displaystyle\min_{1\leq i\leq d}\alpha_i$.
	\end{theorem}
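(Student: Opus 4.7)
The lower bound $0 \leq \Phi_i(t,\omega)$ follows at once from Lemma \ref{Sys positive}, so only the upper bound requires work. My plan is to combine the monotonicity of \eqref{Eq main} with a scalar supersolution comparison. Since $v \succ 0$, first pick $\lambda_0 > 0$ with $\omega \preceq \lambda_0 v$; by the monotonicity noted after Lemma \ref{Sys positive}, $\Phi(t,\omega) \preceq \Phi(t,\lambda_0 v)$ for all $t \geq 0$, so it suffices to bound $\Phi(t,\lambda_0 v)$ from above.

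The key step is to exhibit a scalar function $\lambda:[0,\infty)\to \R_+$ with $\lambda(0) \geq \lambda_0$ such that $z(t) := \lambda(t) v$ is a supersolution of \eqref{Eq main}. Using the homogeneity of $f$ and setting $\mu_i := -f_i(v)/v_i > 0$ (well-defined by $f(v) \prec 0$), the supersolution condition ${}^{\!C}D^{\alpha_i}_{0^+} z_i(t) \geq f_i(z(t))$ reduces to the family of scalar fractional inequalities
\begin{equation*}
    {}^{\!C}D^{\alpha_i}_{0^+}\lambda(t) + \mu_i\,\lambda(t)^p \;\geq\; 0, \qquad i = 1,\dots,d.
\end{equation*}
Once such $\lambda$ is constructed, a first-crossing argument identical in spirit to the ones already used in Proposition \ref{boundsol} and Lemma \ref{Sys positive} --- applying Lemma \ref{compare-FDE} at the first time any component of $\Phi(\cdot,\lambda_0 v) - \lambda(\cdot)v$ touches zero and using Proposition \ref{Pro_Cooperative} together with the supersolution inequality to derive a sign contradiction --- yields $\Phi(t,\lambda_0 v) \preceq \lambda(t) v$, and taking $C$ proportional to $\max_i v_i$ then delivers the claimed componentwise bound.

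The natural candidate is $\lambda(t) := \kappa\,E_{\underline{\alpha}/p}(-\eta t^{\underline{\alpha}/p})$ with $\kappa \geq \lambda_0$ and small $\eta > 0$, since this automatically exhibits the advertised decay rate. The main obstacle is verifying the scalar inequality for every $\alpha_i \geq \underline{\alpha}$ simultaneously: a Mittag--Leffler function of order $\underline{\alpha}/p$ satisfies an exact scalar fractional equation only at its own order, and a direct Laplace-transform computation shows that its Caputo derivative of order $\alpha_i > \underline{\alpha}/p$ inherits a singular short-time factor $t^{\underline{\alpha}/p - \alpha_i}$ coming from the non-vanishing initial value $\lambda(0) = \kappa$. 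To cope with this, I would split the problem by time: on a compact initial window $[0,T_0]$ the a priori estimate $\|\Phi(t,\omega)\|_v \leq \|\omega\|_v$ from Proposition \ref{boundsol}, combined with the fact that $E_{\underline{\alpha}/p}(-\eta t^{\underline{\alpha}/p})$ is bounded away from zero on bounded sets, furnishes the target bound trivially once $C$ is large; on the tail $[T_0,\infty)$ the short-time transient has died off and ${}^{\!C}D^{\alpha_i}_{0^+}\lambda$ is controlled by $\lambda$ itself up to a polynomial factor, reducing the supersolution inequality to a polynomial comparison that succeeds because the prescribed rate $\underline{\alpha}/p$ is strictly slower than the sharp scalar rate $\underline{\alpha}/(p-1)$ when $p > 1$ (and matches it when $p = 1$, where the system is Metzler-linear). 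The slack thereby created can be absorbed by shrinking $\eta$, which is permissible since the constants are allowed to depend on $\omega$. This multi-order scalar verification is the principal technical difficulty; the remainder is a direct application of the monotonicity and comparison machinery already developed in the paper.
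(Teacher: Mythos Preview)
Your proposal is correct and follows essentially the paper's approach: compare $\Phi_i(t,\omega)/v_i$ with a scaled Mittag--Leffler function $m_\varepsilon E_{\underline\alpha/p}(-\eta t^{\underline\alpha/p})$, use the a~priori bound from Proposition~\ref{boundsol} to cover an initial window (the paper takes $T_0=1$ via the normalization $m_\varepsilon:=(m+\varepsilon)/E_{\underline\alpha/p}(-\eta)$, which makes $z_i(t)<0$ on $[0,1]$ automatic), and then run the first-crossing argument for $t_*>1$ using Lemma~\ref{compare-FDE} and Proposition~\ref{Pro_Cooperative}. Two minor points: the preliminary monotonicity reduction to $\Phi(t,\lambda_0 v)$ is an unnecessary detour since the paper works directly with $m=\|\omega\|_v$, and in place of your rate heuristic about ``$\underline\alpha/(p-1)$'' the paper simply computes ${}^{\!C}D^{\alpha_i}_{0^+}\lambda(t)=-\eta m_\varepsilon\, t^{\underline\alpha/p-\alpha_i}E_{\underline\alpha/p,\,1+\underline\alpha/p-\alpha_i}(-\eta t^{\underline\alpha/p})$ explicitly and verifies (in the Remark following the theorem) that the resulting ratio satisfies $\eta I(\eta)\to 0$ as $\eta\to 0$, which is the actual mechanism allowing \eqref{addf} to hold for small $\eta$.
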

	\begin{proof}
	Let  $v\succ0$ which satisfies $f(v)\prec0$. For any initial condition $\omega\in\R^d_{\geq 0}$, by Proposition \ref{boundsol} and Remark \ref{add_r_1}, we see that the system \eqref{Eq main} has a unique global solution $\Phi(\cdot,\omega)$ with $\|\Phi(t,\omega)\|_v\leq \|\omega\|_v$ for all $t\geq 0$. We are only interested in the case when $\|\omega\|>0$. Let $m=\|\omega\|_v$ and choose $\eta>0$ satisfying
	\begin{equation}\label{addf}
	\frac{f_{i}(v)}{v_{i}}+\frac{\eta}{m^{p-1}}\sup_{t\geq 1}\frac{t^{\underline{\alpha}/p-\alpha_i}E_{\underline{\alpha}/p,1+\underline{\alpha}/p-\alpha_i}(-\eta t^{\underline{\alpha}/p})}{\bigg(E_{\underline{\alpha}/p}(-\eta t^{\underline{\alpha}/p})\bigg)^p}<0,\ \forall i=1,\dots,d.
 \end{equation}
Put
	\begin{equation*}
		u(t):={m_\varepsilon}E_\beta(-\eta t^\beta),\;t\geq 0,
	\end{equation*}
here $\beta>0$ will be chosen later and ${m_\varepsilon}:=\displaystyle\frac{m+\varepsilon}{E_\beta(-\eta)}$ with $\varepsilon>0$ is arbitrarily small. We will compare the solution of the system \eqref{Eq main} with the vector-valued function $u\bf{e}$.
By a direct computation, for any $\alpha\in (0,1)$, we have
\begin{align*}
\frac{1}{m_\varepsilon}{^{\!C}D^{{\alpha}}_{0^+}u(t)}&=\frac{1}{\Gamma (1-\alpha)}\int_0^t (t-s)^{-\alpha}\frac{d}{ds}E_\beta(-\eta s^\beta)ds\\
&=\frac{1}{\Gamma (1-\alpha)}\int_0^t (t-s)^{-\alpha}\sum_{k=1}^\infty \frac{(-\eta)^k k\beta s^{\beta k-1}}{\Gamma(\beta k+1)}ds\\
&=\frac{1}{\Gamma (1-\alpha)}\sum_{k=1}^\infty \frac{(-\eta)^k}{\Gamma(\beta k)}\int_0^t (t-s)^{-\alpha}s^{\beta k-1}ds\\
&=\frac{1}{\Gamma (1-\alpha)}\sum_{k=1}^\infty \frac{(-\eta)^kt^{-\alpha+\beta k}}{\Gamma(\beta k)}\int_0^1 \tau^{-\alpha}(1-\tau)^{\beta k-1}d\tau\\
&=\frac{1}{\Gamma (1-\alpha)}\sum_{k=1}^\infty \frac{(-\eta)^kt^{-\alpha+\beta k}}{\Gamma(\beta k)}B(1-\alpha,\beta k)\quad (\text{here}\;B(\cdot,\cdot)\;\text{is the Beta function})\\
&=-\eta\sum_{k=1}^\infty \frac{(-\eta)^{k-1}t^{-\alpha+\beta (k-1)+\beta}}{\Gamma(\beta (k-1)+1-\alpha+\beta)}\\
&=-\eta\sum_{k=0}^\infty \frac{(-\eta)^{k}t^{-\alpha+\beta k+\beta}}{\Gamma(\beta k+1-\alpha+\beta)}\\
&=-\eta t^{-\alpha+\beta}E_{\beta,1-\alpha+\beta}(-\eta t^\beta),\quad \forall t>0.
\end{align*}
The formula above is also true when $\alpha=1$. Define
	$$z_i(t):=\frac{\Phi_i(t,\omega)}{v_i}- u(t),\quad t\ge 0,\;i=1,\dots,d.$$
	Since $z_{i}(t)<0$ for all $t\in[0,1]$ and $i=1,\dots,d$, if the statement that $z(t)\preceq 0$ for all $t\geq 0$ is false, we can find an $t_*>1$ and an index $i_*$ so that
	\begin{align}\label{ctra2-2}
		z_{i_*}(t_*)&=0\ \text{and}\	z_i(t_*)\le0,\ \forall i\ne i_*,\\
		\notag z_{i_*}(t)&<0,\ \forall t\in [0,t_*).
	\end{align}
	This implies
	\begin{align*}
		\Phi_{i_*}(t_*,\omega)&= u(t_*)v_{i_*}\ \text{and}\	\Phi_i(t_*,\omega)\le u(t_*)v_{i},\ \forall i\ne i_*,\\
		\Phi_{i_*}(t,\omega)&<u(t)v_{i_*},\ \forall t\in [0,t_*).
	\end{align*}
	Due to the assumptions $\textup{(A1)}$,  $\textup{(A2)}$ and Proposition \ref{Pro_Cooperative}, the following estimate holds 
	$$f_{i_*}(\Phi(t_*,\omega))\le f_{i_*}\big(u(t_*)v\big)=f_{i_*}\big({m_\varepsilon}E_\beta(-\eta t_*^\beta) v\big)=\left({m_\varepsilon}E_\beta(-\eta t_*^\beta)\right)^pf_{i_*}(v). $$ 
	Then,
	\begin{align*}
		^{\!C}D^{{\alpha}_{i_*}}_{0^+}z_{i_*}(t_*)&=^{\!C}D^{\alpha_{i_*}}_{0^+}\frac{\Phi_{i_*}(t_*,\omega)}{v_{i_*}}-^{\!C}D^{{\alpha}_{i_*}}_{0^+}u(t_*)\\
		&=\frac{1}{v_{i_*}}f_{i_*}(\Phi(t_*,\omega))+{m_\varepsilon}\eta t^{\beta-\alpha_{i_*}}_*E_{\beta,1-\alpha_{i_*}+\beta}(-\eta t^\beta_*)\\
		&\le \left({m_\varepsilon}E_\beta(-\eta t_*^\beta)\right)^p \frac{f_{i_*}(v)}{v_{i_*}}+{m_\varepsilon}\eta t^{\beta-\alpha_{i_*}}_*E_{\beta,1-\alpha_{i_*}+\beta}(-\eta t^\beta_*)\\
		&=\left({m_\varepsilon}E_\beta(-\eta t_*^\beta)\right)^p \left[\frac{f_{i_*}(v)}{v_{i_*}}+\frac{\eta t^{\beta-\alpha_{i_*}}_*E_{\beta,1-\alpha_{i_*}+\beta}(-\eta t^\beta_*)}{(m_\varepsilon)^{p-1}\bigg(E_\beta(-\eta t_*^\beta)\bigg)^p}\right]\\
		&\leq \left({m_\varepsilon}E_\beta(-\eta t_*^\beta)\right)^p \left[\frac{f_{i_*}(v)}{v_{i_*}}+\frac{\eta}{m^{p-1}}\sup_{t\geq 1}\frac{t^{\beta-\alpha_{i_*}}E_{\beta,1-\alpha_{i_*}+\beta}(-\eta t^\beta)}{\bigg(E_\beta(-\eta t^\beta)\bigg)^p}\right].
	\end{align*}
	Taking $\beta=\underline{\alpha}/p$, by \eqref{addf}, we see that
	\[
	^{\!C}D^{{\alpha}_{i_*}}_{0^+}z_{i_*}(t_*)<0.
	\]
However, from \eqref{ctra2-2}, it deduces that $^{\!C}D^{\alpha_{i_*}}_{0^+}z_{i_*}(t^*)\geq 0,$ a contradiction. Hence, we conclude that $z_i(t)< 0$ for all $t\geq 0$ and $i=1,\dots,d.$ That is,	
	$$0\le \frac{\Phi_i(t,\omega)}{v_i}< {m_\varepsilon} E_{\underline{\alpha}/p}(-\eta t^{\underline{\alpha}/p}),\;\forall t\ge 0,\ i=1,\dots,d.$$
	From this, by letting $\varepsilon\to 0$, then
 $$\Phi_i(t,\omega)\leq {\frac{m}{E_{\underline{\alpha}/p}(-\eta)}}  v_iE_{\underline{\alpha}/p}(-\eta t^{\underline{\alpha}/p}),\;\forall t\ge 0,\ i=1,\dots,d.$$
 The proof is complete.
	\end{proof}
 \begin{remark}
     Define
	\begin{equation*}
	I(\eta):=\sup_{t\geq 1}\frac{t^{\underline{\alpha}/p-\alpha_i}E_{\underline{\alpha}/p,1+\underline{\alpha}/p-\alpha_i}(-\eta t^{\underline{\alpha}/p})}{\bigg(E_{\underline{\alpha}/p}(-\eta t^{\underline{\alpha}/p})\bigg)^p}.
	\end{equation*}
 We consider the following two cases.
 
\noindent {\bf Case I}: $p=1$. In this case, we obtain the estimates
\begin{align}
I(\eta)&\leq \sup_{t\geq 1}\frac{E_{\underline{\alpha},1+\underline{\alpha}-\alpha_i}(-\eta t^{\underline{\alpha}})}{E_{\underline{\alpha}}(-\eta t^{\underline{\alpha}})}\notag\\
&=\sup_{u\geq \eta}\frac{E_{\underline{\alpha},1+\underline{\alpha}-\alpha_i}(-u)}{E_{\underline{\alpha}}(-u)}\notag\\
&\leq \sup_{u\geq 0}\frac{E_{\underline{\alpha},1+\underline{\alpha}-\alpha_i}(-u)}{E_{\underline{\alpha}}(-u)},\label{eq4_r}
\end{align}
here $u:=\eta t^{\underline{\alpha}}$. Notice that the quantity $\displaystyle\sup_{u\geq 0}\frac{E_{\underline{\alpha},1+\underline{\alpha}-\alpha_i}(-u)}{E_{\underline{\alpha}}(-u)}$ is finite and does not depend on $\eta$. From \eqref{eq4_r}, we have
\[
0\leq \eta I(\eta)\to 0\;\;\text{as}\;\;\eta\to 0,
\]
which together the assumption $\displaystyle\frac{f_i(v)}{v_i}<0$ implies that there exists an $\eta>0$ small enough such that
\begin{equation*}
	\frac{f_{i}(v)}{v_{i}}+\eta\sup_{t\geq 1}\frac{t^{\underline{\alpha}-\alpha_i}E_{\underline{\alpha},1+\underline{\alpha}-\alpha_i}(-\eta t^{\underline{\alpha}})}{E_{\underline{\alpha}}(-\eta t^{\underline{\alpha}})}<0.
	\end{equation*}
\noindent {\bf Case II}: $p>1$. In this case, for $t\geq 1$, then
\begin{align*}
\frac{t^{\underline{\alpha}/p-\alpha_i}E_{\underline{\alpha}/p,1+\underline{\alpha}/p-\alpha_i}(-\eta t^{\underline{\alpha}/p})}{E_{\underline{\alpha}/p}(-\eta t^{\underline{\alpha}/p})^p}&=\frac{\eta^{p-1}t^{\underline{\alpha}-\alpha_i}\eta t^{\underline{\alpha}/p}E_{\underline{\alpha}/p,1+\underline{\alpha}/p-\alpha_i}(-\eta t^{\underline{\alpha}/p})}{(\eta t^{\underline{\alpha}/p}E_{\underline{\alpha}/p}(-\eta t^{\underline{\alpha}/p}))^p}\\
&\leq \frac{\eta^{p-1}\eta t^{\underline{\alpha}/p}E_{\underline{\alpha}/p,1+\underline{\alpha}/p-\alpha_i}(-\eta t^{\underline{\alpha}/p})}{(\eta t^{\underline{\alpha}/p}E_{\underline{\alpha}/p}(-\eta t^{\underline{\alpha}/p}))^p}\\
&=\eta^{p-1}\frac{uE_{\underline{\alpha}/p,1+\underline{\alpha}/p-\alpha_i}(-u)}{(uE_{\underline{\alpha}/p}(-u))^p},
\end{align*}
where $u:=\eta t^{\underline{\alpha}/p}$. Thus, for $\eta\in (0,1]$, we have
\begin{align*}
I(\eta)&\leq \sup_{u\geq \eta}\eta^{p-1}\frac{uE_{\underline{\alpha}/p,1+\underline{\alpha}/p-\alpha_i}(-u)}{(uE_{\underline{\alpha}/p}(-u))^p}\\
&\leq \eta^{p-1}\max\Big\{\sup_{u\in [\eta,1]}\frac{uE_{\underline{\alpha}/p,1+\underline{\alpha}/p-\alpha_i}(-u)}{(uE_{\underline{\alpha}/p}(-u))^p},\sup_{u\geq 1}\frac{uE_{\underline{\alpha}/p,1+\underline{\alpha}/p-\alpha_i}(-u)}{(uE_{\underline{\alpha}/p}(-u))^p}\Big\}.
\end{align*}
We see that
\begin{align*}
\sup_{u\in [\eta,1]}\frac{uE_{\underline{\alpha}/p,1+\underline{\alpha}/p-\alpha_i}(-u)}{(uE_{\underline{\alpha}/p}(-u))^p}< \frac{1}{\eta^{p-1}}\times \frac{1}{E_{\underline{\alpha}/p}(-1)^p}.
\end{align*}
Furthermore, it is not difficult to check that the limit
\[
\lim_{u\to\infty}\frac{uE_{\underline{\alpha}/p,1+\underline{\alpha}/p-\alpha_i}(-u)}{(uE_{\underline{\alpha}/p}(-u))^p}
\]
exists and is finite. Due to the fact that $E_{\underline{\alpha}/p,1+\underline{\alpha}/p-\alpha_i}(- t^{\underline{\alpha}/p})$, $E_{\underline{\alpha}/p}(- t^{\underline{\alpha}/p})$ are continuous and positive on $[0,\infty)$, the quantity
\begin{equation*}
	\sup_{u\geq 1}\frac{uE_{\underline{\alpha}/p,1+\underline{\alpha}/p-\alpha_i}(-u)}{(uE_{\underline{\alpha}/p}(-u))^p}
	\end{equation*}
	is finite and does not depend on $\eta$. In short, we also obtain $0<\eta I(\eta)\to 0$ as $\eta\to 0$.
	Using the assumption $\displaystyle\frac{f_i(v)}{v_i}<0$, there is an $\eta>0$ small enough such that
	\begin{equation*}
	\frac{f_{i}(v)}{v_{i}}+\frac{\eta}{m^{p-1}}\sup_{t\geq 1}\frac{t^{\underline{\alpha}/p-\alpha_i}E_{\underline{\alpha}/p,1+\underline{\alpha}/p-\alpha_i}(-\eta t^{\underline{\alpha}/p})}{E_{\underline{\alpha}/p}(-\eta t^{\underline{\alpha}/p})^p}<0.
	\end{equation*}
The statement \eqref{addf} is completely clarified.
 \end{remark}
	\begin{remark}
	With an arbitrary initial condition $\omega\in\R^d_{\geq 0}$, consider the system \eqref{Eq main} when $\alpha_1=\dots=\alpha_d=\alpha$ and $p=1$. By choosing $\eta>0$ such that
	\begin{equation*}
	\frac{f_i(v)}{v_i}+\eta<0,\; i=1,\dots,d,
	\end{equation*}
	we obtain a sharp estimate for the solution $\Phi(\cdot,\omega)$ as
	\[
	\Phi_i(t,\omega)\leq \frac{\|\omega\|_v}{E_{\alpha}(-\eta)}v_iE_\alpha(-\eta t^\alpha),\quad\forall t\geq 0,\;i=1,\dots,d.
	\]
	\end{remark}
	\begin{remark}
	Consider the system \eqref{Eq main} when $\alpha_1=\dots=\alpha_d=\alpha$ and $p>1$. Then, the condition \eqref{addf} becomes
	\begin{equation}\label{eq_add_r}
	\frac{f_i(v)}{v_i}+\frac{\eta}{m^{p-1}}\sup_{t\geq 1}\frac{t^{\alpha/p-\alpha}E_{\alpha/p,1+\alpha/p-\alpha}(-\eta t^{\alpha/p})}{E_{\alpha/p}(- \eta t^{\alpha/p})^p}<0,\; \forall i=1,\dots,d.
	\end{equation}
	In this case, the optimal estimate for the solution $\Phi(\cdot,\omega)$ is
	\[
	\Phi_i(t,\omega)\leq  \frac{m}{E_{\alpha/p}(-\eta)}v_i E_{\alpha/p}(-\eta t^{\alpha/p}),\;\forall t\ge 0,\ i=1,\dots,d,
	\]
 where $\eta>0$ is small enough satisfying \eqref{eq_add_r} and $m=\|\omega\|_v$.
	\end{remark}
Before closing this part, we introduce an application of the main result in our current work concerning the asymptotic behaviour of solutions to a class of fractional order systems modelling $d$ cooperating biological species. Let the particular class of fractional-order Kolmogorov systems
\begin{equation} \label{Eq coll}
		\begin{cases}
			^{\!C}D^{\hat{\alpha}}_{0^+}w(t)&={\text{diag}{(w(t))}}(b+f(w(t))),\ \forall t> 0,\\
			w(0)&=\omega \in \R_{\geq 0}^d,
		\end{cases}
	\end{equation}
here $\hat{\alpha}\in (0,1]^d$, $b\in\R^d$ and $f:\R^d\rightarrow \R^d$ is continuous. 
When $\alpha_1=\cdots=\alpha_d=1$, this is a model of Lotka-Volterra systems (a subclass of Kolmogorov systems) which has been extensively studied in the literature (see e.g., \cite{Smith86, Aeyels1}). For the case $\alpha_1=\cdots=\alpha_d\in (0,1)$, the stability of the equilibrium point of some of its special forms was reported in \cite{Ahmed, Elsadany, Tuan16}. Suggested by Theorem \ref{Rate-p>1}, we propose the following corollary.
\begin{corollary}\label{lks}
If $b\in \R^d_+$ and $f(\cdot)$ satisfies the assumptions  $\textup{(A1)}-\textup{(A3)}$, then the system \eqref{Eq coll} has a unique equilibrium point $\omega^*\in\R^d_{+}$. Furthermore, it is globally attractive, that is, for any initial condition $\omega\in \R^d_{+}$, we have
\[
\lim_{t\to\infty}\Phi(t,\omega)=\omega^*.
\]
Furthermore, the convergence rate of solutions does not exceed $t^{-\underline{\alpha}/p}$ as $t\to\infty$.
\end{corollary}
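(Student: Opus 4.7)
I would split the argument into: (i) existence and uniqueness of $\omega^*$, (ii) monotonicity of \eqref{Eq coll} together with forward invariance of the order intervals through $\omega^*$, and (iii) a Mittag-Leffler envelope adapted from the proof of Theorem~\ref{Rate-p>1}. For existence of $\omega^*$ I would analyse the classical ODE $\dot u = b + f(u)$ with $u(0) = 0$: picking $r > 0$ with $r^p f(v) \prec -b$, the cooperative field $b + f(\cdot)$ satisfies $b + f(0) = b \succ 0$ and $b + f(rv) \prec 0$. Differentiating gives $\frac{d}{dt}\dot u = Df(u)\dot u$, driven by a Metzler matrix with positive initial data, so $\dot u(t) \succeq 0$; an invariance argument keeps $u(t) \preceq rv$, and monotone convergence yields $\omega^* \in \R^d_+$ with $f(\omega^*) = -b$. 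For uniqueness, if $\omega_1, \omega_2 \succ 0$ are two equilibria, set $\lambda := \min_i \omega_{2,i}/\omega_{1,i}$ with minimiser $i_*$; then $\lambda\omega_1 \preceq \omega_2$ with equality in coordinate $i_*$, and Proposition~\ref{Pro_Cooperative} plus homogeneity give $-\lambda^p b_{i_*} = f_{i_*}(\lambda\omega_1) \leq f_{i_*}(\omega_2) = -b_{i_*}$, forcing $\lambda \geq 1$; a symmetric argument closes the case.

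\textbf{Monotonicity and sandwich.} The vector field $G(w) := \mathrm{diag}(w)(b + f(w))$ has Metzler Jacobian on $\R^d_+$ (the off-diagonal entries $w_i\,\partial_j f_i$ are nonnegative), so by the reasoning of Lemma~\ref{Sys positive} and its remark, \eqref{Eq coll} is positive and monotone on $\R^d_+$. A first-crossing contradiction in the spirit of Proposition~\ref{boundsol}, combining Lemma~\ref{compare-FDE} at a hypothetical touching time with Proposition~\ref{Pro_Cooperative} and $f_{i_*}(\omega^*) = -b_{i_*}$, shows that both $\{w \succeq \omega^*\}$ and $\R^d_+ \cap \{w \preceq \omega^*\}$ are forward invariant. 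Writing $\omega^+ := \omega \vee \omega^*$ and $\omega^- := \omega \wedge \omega^*$ for an arbitrary $\omega \in \R^d_+$, monotonicity yields the sandwich $\Phi(t,\omega^-) \preceq \Phi(t,\omega) \preceq \Phi(t,\omega^+)$ with $\Phi(t,\omega^-) \preceq \omega^* \preceq \Phi(t,\omega^+)$ for all $t \geq 0$.

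\textbf{Rate estimate.} It suffices to dominate $e^+(t) := \Phi(t,\omega^+) - \omega^*$ and $e^-(t) := \omega^* - \Phi(t,\omega^-)$, both taking values in $\R^d_{\geq 0}$. The crucial observation is that $\omega^*$ itself qualifies as an admissible $v$ in assumption (A3), since $f(\omega^*) = -b \prec 0$. Running the envelope argument from Theorem~\ref{Rate-p>1} with this choice $v = \omega^*$, I would compare $e^+_i/\omega^*_i$ to $u(t) := m_\varepsilon E_{\underline{\alpha}/p}(-\eta t^{\underline{\alpha}/p})$ for small $\eta > 0$. At a first touching time $t_*$ in coordinate $i_*$ one has $e^+(t_*) + \omega^* \preceq (1+u(t_*))\omega^*$ with equality in coordinate $i_*$, so Proposition~\ref{Pro_Cooperative} combined with homogeneity of $f$ gives
\[
f_{i_*}(e^+(t_*) + \omega^*) - f_{i_*}(\omega^*) \leq \bigl[(1+u(t_*))^p - 1\bigr] f_{i_*}(\omega^*) = -\bigl[(1+u(t_*))^p - 1\bigr] b_{i_*}.
\]
Multiplying by $e^+_{i_*}(t_*) + \omega^*_{i_*} = (1+u(t_*))\omega^*_{i_*}$ and calibrating $\eta$ as in \eqref{addf} produces the same sign contradiction as in the proof of Theorem~\ref{Rate-p>1}. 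A symmetric envelope bounds $e^-$, and the asymptotic $E_\beta(-\eta t^\beta) \sim C/(\eta t^\beta)$ as $t \to \infty$ delivers both $\Phi(t,\omega) \to \omega^*$ and the rate bound $t^{-\underline{\alpha}/p}$.

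\textbf{Main obstacle.} The delicate point is calibrating the Mittag-Leffler envelope for the shifted vector field $y \mapsto \mathrm{diag}(y + \omega^*)(f(y + \omega^*) - f(\omega^*))$, which is not homogeneous in $y$, so Theorem~\ref{Rate-p>1} cannot be invoked verbatim. The way around this is exactly the choice $v = \omega^*$: along the ray $\{(1+u)\omega^* : u \geq 0\}$, homogeneity of $f$ is restored in the clean form $f((1+u)\omega^*) = (1+u)^p f(\omega^*) = -(1+u)^p b$, and it is this identity that lets one recover the exponent $\underline{\alpha}/p$ in the envelope despite working around a non-zero equilibrium.
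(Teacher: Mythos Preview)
Your proposal is correct and follows essentially the same route the paper sketches: the paper's own proof consists of a single sentence pointing to Proposition~\ref{boundsol}, Theorem~\ref{Rate-p>1}, and the equilibrium arguments of Smith and Leenheer--Aeyels, and you have filled in precisely those details---existence/uniqueness of $\omega^*$ via the cooperative structure of $b+f$, monotonicity and invariance of order intervals around $\omega^*$ in the spirit of Proposition~\ref{boundsol} and Lemma~\ref{Sys positive}, and a Mittag--Leffler envelope with the key choice $v=\omega^*$ to recover homogeneity along the ray through $\omega^*$. The one point worth making explicit when you write it up is that the order intervals $[s\omega^*,r\omega^*]$ with $0<s\le 1\le r$ are forward invariant (this gives global existence for \eqref{Eq coll}, which does not follow from Proposition~\ref{boundsol} directly since $G(w)=\mathrm{diag}(w)(b+f(w))$ is not homogeneous), and that in the lower envelope for $e^-$ one uses $1-(1-u)^p\ge u^p$ on $[0,1]$ together with $u(t_*)\le m+\varepsilon<1$ to close the contradiction.
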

\begin{proof}
The proof is obtained by combining the arguments as in the proof of Proposition \ref{boundsol}, Theorem \ref{Rate-p>1} and ideas proposed by H.L. Smith \cite[Theorem 2.1]{Smith86} and by P.L. Leenheer and D. Aeyels \cite[Theorem 5]{Aeyels1}.
\end{proof}
\section{Numerical examples} 
This section presents some numerical examples to illustrate the given theoretical results.
\begin{example}
	Consider the system
	\begin{equation} \label{Eq main1}
		\begin{cases}
			^{\!C}D^{\hat{\alpha}}_{0^+}w(t)&=f(w(t)),\ \forall t> 0,\\
			w(0)&=\omega \in \R_{\geq 0}^2,
		\end{cases}
	\end{equation}
here
$$\hat{\alpha}=\left(\begin{array}{cc}
	0.24 \\ 0.55 \end{array}\right),\ {f(w_1,w_2)=\left(\begin{array}{cc}
	-3\sqrt{w_1^3}+2w_1\sqrt{w_2} \\ \sqrt{w_1}w_2-4\sqrt{w_2^3} \end{array}\right).}$$
It is clear to see that the function $f(\cdot)$ is cooperative and homogeneous of degree $p=\displaystyle\frac{3}{2}$. Due to $f(1,1)\prec 0$, we conclude based on  Theorem \ref{Rate-p>1} that the system is globally attractive.  
\begin{figure}
	\begin{center}
		\includegraphics[scale=.7]{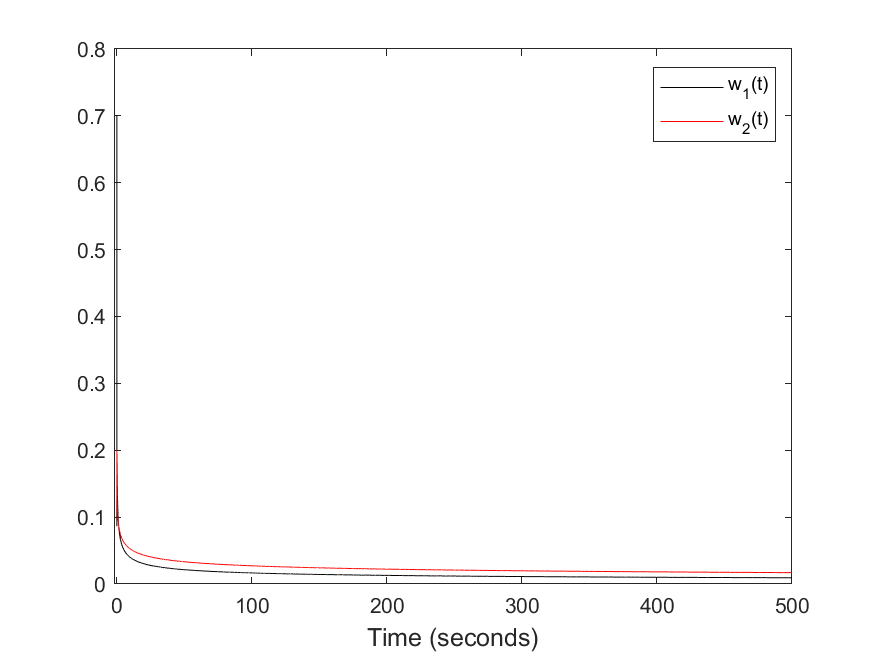}
	\end{center}
	\begin{center}
		\caption{Orbits of the solution to the system \eqref{Eq main1} with the initial condition $ \omega=(
	0.7,0.2)^{\rm T}$.}
	\end{center}
\end{figure}
\end{example}
\begin{example}
	Consider the system
	\begin{equation} \label{Eq main2}
		\begin{cases}
			^{\!C}D^{\hat{\alpha}}_{0^+}w(t)&=f(w(t)),\ \forall t> 0,\\
			w(0)&=\omega \in \R_{\geq 0}^3,
		\end{cases}
	\end{equation}
	with
	$$\hat{\alpha}=\left(\begin{array}{cc}
		0.45 \\ 0.45 \\ 0.45 \end{array}\right),\ {f(w_1,w_2,w_3)=\left(\begin{array}{cc}
		-w_1+w_2+w_3 \\ \sqrt{w_1^2+w_3^2}-4w_2\\ w_1+\sqrt{w_2^2+w_3^2}-5w_3 \end{array}\right).}$$
In this case, it is easy to check that the function $f(\cdot)$ is cooperative and homogeneous of degree $p=1$. Since $f(3,1,1)\prec 0$, by Theorem \ref{Rate-p>1}, every nontrivial solution to the system converges to the origin. 
	\begin{figure}
		\begin{center}
			\includegraphics[scale=.7]{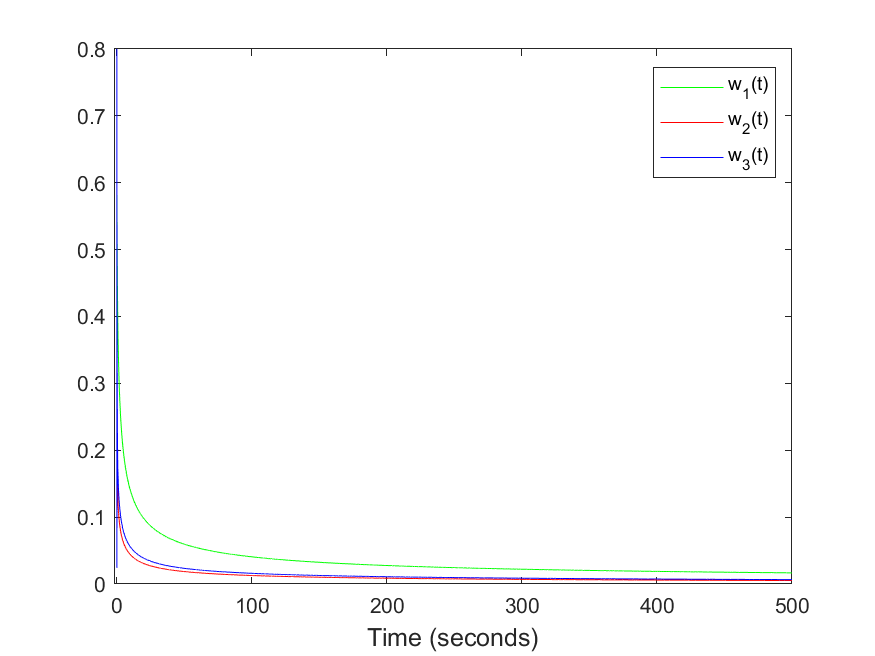}
		\end{center}
		\begin{center}
			\caption{Orbits of the solution to the system \eqref{Eq main2} with the initial condition $\omega=(
		0.5, 0.3, 0.8)^{\rm T}.$}
		\end{center}
	\end{figure}
\end{example}
\begin{example}
Consider a fractional-order two-dimensional Lotka--Volterra system
\begin{equation} \label{Eq main3}
		\begin{cases}
			^{\!C}D^{\hat{\alpha}}_{0^+}w(t)&=\text{diag}{(w(t))}(b+f(w(t))),\ \forall t> 0,\\
			w(0)&=\omega \in \R_{\geq 0}^2,
		\end{cases}
	\end{equation}
here
$$\hat{\alpha}=\left(\begin{array}{cc}
	0.4 \\ 0.6 \end{array}\right),\ b=(1,1)^{\rm T},\,{f(w_1,w_2)=\left(\begin{array}{cc}
	-3w_1+w_2 \\ w_1-w_2 \end{array}\right)}.$$
The system \eqref{Eq main3} has a unique nontrivial equilibrium point as $(1,2)^{\rm T}$. By Corollary \ref{lks}, we claim that for any $\omega\in \R^2_{+}$, the unique solution $\Phi(\cdot,\omega)$ satisfies
$\lim_{t\to\infty}\Phi(t,\omega)=\left(\begin{array}{cc}
	1 \\ 2 \end{array}\right).$
\begin{figure}
		\begin{center}
			\includegraphics[scale=.7]{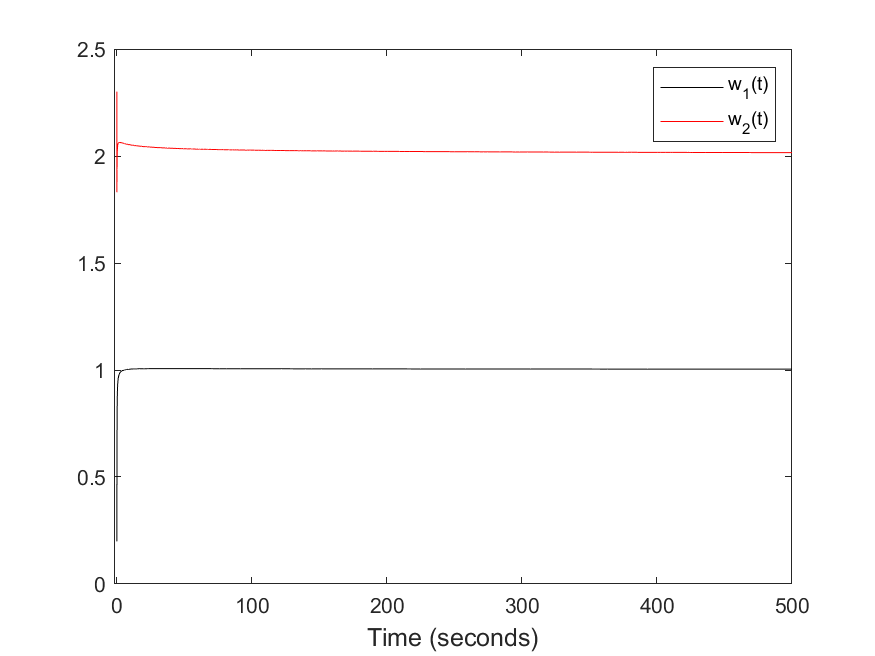}
		\end{center}
		\begin{center}
			\caption{Orbits of the solution to the system \eqref{Eq main3} with the initial condition $\omega=(0.2, 2.3)^{\rm T}.$}
		\end{center}
	\end{figure}
\end{example}


\begin{thebibliography}{1}
\bibitem{Aeyels2}
 D. Aeyels and P. De Leenheer, Extension of the Perron--Frobenius Theorem to Homogeneous Systems. {\em SIAM Journal on Control and Optimization}, {\bf 41} (2002), no. 2, pp. 563--581. 
 %
\bibitem{Ahmed}
E. Ahmed, A. M. A. El-Sayed, and H. A. A. El-Saka, Equilibrium points, stability and numerical solutions of fractional-order predator-prey and rabies models. {\em J. Math. Anal. Appl.,} {\bf 
325} (2007), no. 1, pp. 542--553.
%
\bibitem{Baleanu_1}
D. Bǎleanu and A.M. Lopes, {\em Handbook of Fractional Calculus with Applications: Applications in Engineering, Life and Social Sciences, Part A.} Berlin, Boston: De Gruyter, 2019. 
%
\bibitem{Baleanu_2}
D. Bǎleanu and A.M. Lopes, {\em Handbook of Fractional Calculus with Applications: Applications in Engineering, Life and Social Sciences, Part B.} Berlin, Boston: De Gruyter, 2019. 
%
\bibitem{Benvenuti}
L. Benvenuti, L. Farina, and B.D.O. Anderson, The positive side of
filters: a summary. {\em IEEE Circ. Sys. Magazine.,} {\bf 1} (2001), no. 3, pp. 32--36.
%
\bibitem{Blanchini}
F. Blanchini, P. Colaneri, and M.E. Valcher, {Switched linear positive
systems.} {\em Foundations and Trends in Systems and Control}, {\bf 2} (2015), no. 2, pp. 101--273.
\bibitem{Carson}
E. Carson and C. Cobelli, {\em Modelling Methodology for Physiology and
Medicine.} Academic Press, San Diego, 2001.
%
\bibitem{Tuan16}
N.D. Cong, T.S. Doan, S. Siegmund, and H.T. Tuan, Linearized asymptotic stability for fractional differential equations. {\em Electronic Journal of Qualitative Theory of Differential Equations}, {\bf 39} (2016), pp. 1--13.
%
\bibitem{Tuan}
	N.D. Cong, H.T. Tuan, and H.Trinh, On asymptotic properties of solutions to fractional differential equations. {\em Journal of Mathematical Analysis and Applications}, {\bf 484} (2020), 123759.
%
\bibitem{Coxson}
P.G. Coxson and H. Shapiro, Positive reachability and controllability
of positive systems. {\em Linear Algebra \& its Appl.,} {\bf 94} (1987), pp. 35--53.
%
\bibitem{Kai}
K.~Diethelm,
\newblock{\em The Analysis of Fractional Differential Equations: An Application-Oriented Exposition Using Differential Operators of Caputo Type.}
\newblock{ Lecture Notes in Mathematics,} {\bf 2004}.
\newblock{Springer-Verlag, Berlin, 2010.}
%
\bibitem{Elsadany}
A.A. Elsadany and A.E. Matouk, Dynamical behaviors of fractional-order Lotka–Volterra predator-prey model and its discretization. {\em J. Appl. Math. Comput.,} {\bf 49} (2015), pp. 269--283.
%
\bibitem{FCM}
H. R. Feyzmahdavian, T. Charalambous, and M. Johansson, Exponential Stability of Homogeneous Positive Systems of Degree One With Time-Varying Delays. {\em IEEE Transactions on Automatic Control}, {\bf 59} (2014), no. 6, pp. 1594--1599.
%
\bibitem{Haddad}
W.M. Haddad, V. Chellaboina, and Q. Hui, {\em Nonnegative and Compartmental Dynamical Systems.} Princeton University Press, Princeton, New Jersey, 2010.
%
\bibitem{Vargas}
E. Hernandez-Vargas, R. Middleton, P. Colaneri, and F. Blanchini,
Discrete-time control for switched positive systems with application
to mitigating viral escape. {\em  International Journal of Robust and Nonlinear Control,} {\bf 21} (2011), pp. 1093--1111.
%
\bibitem{Hirsch}
M.W. Hirsch, Systems of Differential Equations Which Are Competitive or Cooperative: I. Limit Sets. {\em SIAM Journal on Mathematical Analysis}, {\bf 13} (1982), no. 2, pp. 167--179.
%
\bibitem{Aeyels1}
P.D. Leenheer and D. Aeyels, {Stability Properties of Equilibria of Classes of Cooperative Systems}. {\em IEEE Transactions on Automatic Control}, {\bf 46} (2001), no. 12, pp. 1996--2001.
%
\bibitem{Mason}
		O. Mason and M. Verwoerd, Observations on the stability properties of cooperative systems. {\em Systems and Control Letters}, {\bf 58} (2009), pp. 461--467.
%
\bibitem{Moreno}
Y. Moreno, R. Pastor-Satorras, and A. Vespignani, Epidemic outbreaks
in complex heterogeneous networks. {\em The European Physical J. B:
Condensed Matter and Complex Systems}, {\bf 26} (2002), no. 4, pp. 521--529.
%
\bibitem{Nieuwenhuis}
J.W. Nieuwenhuis, Some results about a Leontieff-type model. {\em In
C.I.Byrnes and Lindquist A, editors, Frequency domain and state space methods for linear systems}, pp. 213--225. Elsevier Science, 1986.
%
\bibitem{Petras}
I. Petráš, {\em Handbook of Fractional Calculus with Applications: Applications in Control.} Berlin, Boston: De Gruyter, 2019.
%
\bibitem{Shen}
W. Shen and X.Q. Zhao, Convergence in almost periodic cooperative systems with a first integral. {\em Proc. Amer. Math. Soc.}, {\bf 133} (2005), pp. 203--212.
%
\bibitem{Smillie}
J. Smillie, Competitive and Cooperative Tridiagonal Systems of Differential Equations. {\em SIAM Journal on Mathematical Analysis}, {\bf 15} (1984), no. 3, pp. 530--534.
%
\bibitem{Smith86}
H.L. Smith, On the Asymptotic Behavior of a Class of Deterministic Models of Cooperating Species. {\em SIAM Journal on Applied Mathematics}, {\bf 46} (1986), no. 3, p. 368--375.
%
\bibitem{H.L.Smith} 
H.L. Smith, {\em Monotone Dynamical Systems: An Introduction to
	the Theory of Competitive and Cooperative Systems, vol. 41 of
	Mathematical Surveys and Monographs}. American Mathematical Society, Providence, RI, USA, 1995.
%
\bibitem{Tarasov_1}
V.E. Tarasov, {\em Handbook of Fractional Calculus with Applications: Applications in Physics, Part A.} Berlin, Boston: De Gruyter, 2019.
%
\bibitem{Tarasov_2}
V.E. Tarasov, {\em Handbook of Fractional Calculus with Applications: Applications in Physics, Part B.} Berlin, Boston: De Gruyter, 2019.
%
\bibitem{Qiang-xiao}
Q. Xiao,  Z. Huang, Z. Zeng, T. Huang, and F. Lewis, Stability of homogeneous positive systems with time-varying delays.
{\em Automatica}, {\bf 152}, June 2023, 110965.
%
\bibitem{Vainikko_16}
	G. Vainikko, Which functions are fractionally differentiable? 
	{\em Z. Anal. Anwend.}, {\bf 35} (2016), no. 4, pp. 465--487.
	\bibitem{Vecchio}
D. Del Vecchio and R. M. Murray, {\em Biomolecular Feedback Systems.}
Princeton University Press, Princeton, New Jersey, 2014.
\end{thebibliography}
\end{document}